\def\seq#1_#2{\langle #1_#2:#2\in\omega\rangle}
\def\fc#1|#2{#1\uparrow#2}
\def\ain{\subseteq^*}
\def\cl#1{\overline{#1}}
\def\N{{\mathbb N}}
\def\Q{{\mathbb Q}}
\def\R{{\mathbb R}}
\def\C{{\mathfrak c}}
\def\Xo{{\bf 1}_X}
\def\set#1:#2.{{\{\,#1:#2\,\}}}
\def\Int{\mathop{\rm Int}}
\def\tN{{[\N]^{<\omega}}}
\def\tNN{{\tN\otimes\tN}}
\def\pQ{{pseudo-\hbox{$\Q$}}}
\def\so{\mathop{\rm so}}
\providecommand{\keywords}[1]{\textbf{\textit{Keywords:}} #1}
\title{On sequential analytic groups}
\author{Alexander Y.~Shibakov\footnote{Tennessee Tech.\ Universiy,
email: {\tt ashibakov@tntech.edu}}}
\newtheorem{theorem}{Theorem}
\newtheorem{lemma}{Lemma}
\newtheorem{example}{Example}
\newtheorem{corollary}{Corollary}
\newtheorem{remark}{Remark}
\newtheorem{definition}{Definition}
\newtheorem{question}{Question}
\begin{document}
\maketitle
\begin{abstract}
\noindent We answer a question of S.~Todor\v cevi\'c and
C.~Uzc\'ategui from \cite{TU1} by showing that the only possible
sequential orders of sequential analytic groups are $1$ and
$\omega_1$. Other results on the structure of sequential analytic spaces
and their relation to other classes of spaces are given as well. In
particular, we provide a full topological classification of sequential
analytic groups by showing that all such groups are either metrizable
or $k_\omega$-spaces, which, together with a result by Zelenyuk,
implies that there are exactly $\omega_1$ non homeomorphic analytic
sequential group topologies.
\end{abstract}
\keywords{analytic space, topological group, sequential space}
\section{Introduction}
Spaces with `definable' topologies are ubiquitous in mathematics. They
often appear as examples when the topology construction does not use
the axiom of choice and frequently show up inside function
spaces (see \cite{TU1} for references and further motivation). To make
the notion of `definable' more precise recall that a family of subsets
of some countable set $X$ viewed as a subset of $2^X$ in the natural
product topology is called {\em analytic} (see \cite{Kechr}) if it is a
continuous image of the irrationals $\N^\omega$.

A variety of reasons to study {\em analytic spaces}, i.e.~countable
topological spaces whose topology is analytic is given
in \cite{TU1}, \cite{TU2}, and \cite{T1}. The authors of \cite{TU2}
coined the term {\em effective topology} for the research involving
such spaces and presented a number of questions whose
answers depend on various set-theoretic assumptions in the realm of general
topological spaces (such as the {\em Malykhin problem},
see \cite{hrusak}) that have effective counterparts that can be resolved in
ZFC alone.

Recall that a space $X$ is called {\em sequential} if whenever $\cl{A}\setminus
A\not=\varnothing$ for some $A\subseteq X$ there exists a convergent
sequence $S\subseteq A$ such that $S\to x\in\cl{A}\setminus A$. If $X$
is sequential one naturally defines the {\em sequential closure\/} of
a subset $A$ of $X$ as the set $[A]'$ of all the limits of all the
convergent sequences in $A$. Recursively putting $[A]_0=A$,
$[A]_{\alpha+1}=[[A]_\alpha]'$, and taking unions at the limit
stages, one arrives at the concept of an iterated sequential
closure. It is well known that for any $A\subseteq X$ where $X$ is
sequential, there exists an 
$\alpha\leq\omega_1$ such that $[A]_\alpha=\cl{A}$. This observation
naturally leads to the definition of the {\em sequential order}
$\so(X)$ of a sequential space $X$ as the smallest ordinal
$\gamma\leq\omega_1$ such that $\cl{A}=[A]_\gamma$ for every
$A\subseteq X$. {\em Fr\'echet} spaces are defined as sequential
spaces of sequential order $\leq1$.

In \cite{TU1} S.~Todor\v cevi\'c and C.~Uzc\'ategui show among other
results that a countable
topological group is metrizable if and only if it is analytic and
Fr\'echet thus solving the effective version of Malykhin's question on
the existence of a non metrizable countable Fr\'echet group (the non
effective version of this question was answered in \cite{hrusak}). In
the same paper they pose a question about the sequential orders of
sequential analytic groups which can be considered an effective
version of a question of Nyikos (see \cite{nyikos} and \cite{Sh1}).

In this paper we answer this question by showing that the only
possible sequential orders of sequential analytic groups are $1$ and
$\omega_1$. In addition, we show that such groups have a very well
defined topological structure and their topologies are 
completely described by an ordinal invariant that measures the
scatteredness of their compact subspaces (see below for a more precise
discussion).

We assume that all topological spaces appearing below are regular and
use the standard set-theoretic notation and terminology
(see \cite{kunen} and \cite{Kechr}). We proceed by defining some of the less common
concepts.

A space $X$ is called a $k_\omega$-space if there exists a $\set
K_n:n\in\omega.$ where each $K_n$ is a compact subset of $X$ such that
a set $U\subseteq X$ is open if and only if each $U\cap K_n$ is
relatively open. The class of $k_\omega$-spaces is stable under taking
products, i.e.\ the product of two (or any finite number of)
$k_\omega$-spaces is again $k_\omega$.

Countable $k_\omega$-spaces are sequential and analytic (more
precisely, their topology is $F_{\sigma\delta}$), and form a subclass of
$\aleph_0$-spaces (see \cite{michael}). Instead of the original
definition we shall use the following characterization that describes
$\aleph_0$-spaces in the narrow case when $X$ is sequential.

\begin{lemma}\label{azs}A sequential space $X$ is an
$\aleph_0$-space if and only if there exists a countable collection
$\set A_n:n\in\omega.$ of subsets of $X$ such that for any open
$U\subseteq X$ and any converging sequence $S\subseteq U$ such that
$S\to x\in U$ there is an $n\in\omega$ such that $K_n\subseteq U$ and
$K_n\cap S$ is infinite.
\end{lemma}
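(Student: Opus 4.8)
The plan is to prove the two implications separately; ($\Rightarrow$) is a short definition‑chase, and ($\Leftarrow$) carries the content. For ($\Rightarrow$), recall that an $\aleph_0$‑space has a countable $k$‑network (see \cite{michael}) — a countable family $\mathcal{P}$ of subsets such that every compact set contained in an open set $U$ lies in $\bigcup\mathcal{F}\subseteq U$ for some finite $\mathcal{F}\subseteq\mathcal{P}$ — and take $\set A_n:n\in\omega.$ to be the closure of $\mathcal{P}$ under finite unions, still countable. Given an open $U$ and a convergent sequence $S\to x\in U$, apply the $k$‑network property to the compact set $S\cup\{x\}\subseteq U$ to obtain an $A_n$ with $S\subseteq S\cup\{x\}\subseteq A_n\subseteq U$; then $A_n\cap S=S$ is infinite, as required.

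For ($\Leftarrow$), suppose $\set A_n:n\in\omega.$ has the stated property. Since $X$ is countable, the family $\mathcal{N}$ of all sets $A_{n_1}\cup\dots\cup A_{n_k}\cup F$, with the $A_{n_i}$ from our list and $F$ a finite subset of $X$, is countable and closed under finite unions, so it is enough to show $\mathcal{N}$ is a $k$‑network, i.e.\ that every compact $K\subseteq U$ (with $U$ open) is contained in a single member of $\mathcal{N}$ that is itself contained in $U$. If $K$ is finite, $K$ is such a member. If $K$ is infinite, then, being a countable compact Hausdorff space, $K$ is metrizable, hence sequentially compact. Enumerate $\set A_n:A_n\subseteq U.$ as $\set B_k:k\in\omega.$ (repeating members if the family is finite) and put $C_k=B_0\cup\dots\cup B_k\subseteq U$. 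The crux is the subclaim that $K\setminus C_k$ is finite for some $k$: granting it, $N=C_k\cup(K\setminus C_k)$ lies in $\mathcal{N}$ and satisfies $K\subseteq N\subseteq U$, finishing the proof.

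To prove the subclaim I would argue by contradiction. If every $K\setminus C_k$ were infinite, then, since the $C_k$ increase, I could choose distinct points $s_k\in K\setminus C_k$; by sequential compactness of $K$, extract a convergent subsequence $s_{k_j}\to x\in K\subseteq U$, and discard the one term (if any) equal to $x$, obtaining a nontrivial convergent sequence $S=\set s_{k_j}:j\in\omega.\subseteq U$ with limit $x\in U$. The hypothesis now yields an $n$ with $A_n\subseteq U$ and $A_n\cap S$ infinite; but $A_n\subseteq U$ means $A_n=B_k$ for some $k$, while $s_{k_j}\notin C_{k_j}\supseteq B_k$ whenever $k_j\geq k$, so $B_k\cap S$ is finite — a contradiction. (In particular the family enumerated as $\set B_k:k\in\omega.$ is nonempty, since otherwise $U$ would contain no nontrivial convergent sequence with limit in $U$, contradicting that the infinite metrizable compactum $K$ does.)

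The step I expect to be the main obstacle is packaging ($\Leftarrow$) correctly rather than any single hard estimate. Two points need care: one must adjoin all finite subsets of $X$ to the candidate network, because the hypothesis says nothing about compact sets — e.g.\ singletons — met by no convergent sequence lying in $U$; and one must invoke metrizability (hence sequential compactness) of the countable compact Hausdorff subspace $K$ in order to produce the convergent sequence that triggers the hypothesis. With those in hand, the diagonalization against the increasing unions $C_k$ is routine.
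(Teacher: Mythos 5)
The paper offers no proof of this lemma at all --- it is stated as a known reformulation of Michael's pseudobase definition of $\aleph_0$-spaces, with only the citation to \cite{michael} --- so there is no argument of the author's to compare yours against; I can only judge your proof on its own terms, and it is correct. Your forward direction is the expected definition-chase (apply the countable $k$-network, closed under finite unions, to the compact set $S\cup\{x\}$), and your backward direction correctly identifies the two genuine issues: finite compacta are invisible to the hypothesis, so all finite subsets of $X$ must be adjoined to the candidate pseudobase, and an infinite compact $K\subseteq U$ must be shown to admit a nontrivial convergent sequence so that the hypothesis can be triggered; your diagonalization against the increasing unions $C_k$ then closes the argument. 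Two remarks. First, both your adjunction of finite sets and your appeal to metrizability of $K$ silently use that $X$ is countable; the lemma as printed says only ``sequential space,'' and is in fact false without countability (an uncountable discrete space satisfies the stated condition vacuously but is not an $\aleph_0$-space), so you are right to read the paper's standing countability convention into the statement, though it would be worth saying so explicitly. Second, your argument never uses sequentiality of $X$ itself --- the reverse direction runs on metrizability (hence sequential compactness) of countable compact Hausdorff subspaces --- so you actually prove the equivalence for all countable regular spaces, which is slightly more than claimed; that is a feature, not a bug. (You also silently repair the statement's typo, reading $K_n$ as $A_n$, which is clearly intended.)
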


Sequential $\aleph_0$-spaces are exactly the quotient images of
separable metric spaces (\cite{michael}). Even in the case of a
countable $X$, not every $\aleph_0$-space is a $k_\omega$-space,
however, when $X$ is a countable sequential non Fr\'echet topological group, a
corollary of a more general result in \cite{tbana} implies that $X$
is a $k_\omega$-space if and only if $X$ is an $\aleph_0$-space. A
result in \cite{Sh2} shows that for each such group $\so(X)=\omega_1$. Perhaps the most
surprising property of the class of all $k_\omega$ countable group topologies is
that there are exactly $\omega_1$ of them, moreover, the topological
type of such group is uniquely described by the supremum of {\em
Cantor-Bendixson ranks} of its compact subspaces (see \cite{Z} and \cite{kannan}). 

Recall that a collection of open subsets of a topological space is
called a {\em $\pi$-base} if every open subset of the space contains a
member of the collection. Furthermore, a collection of open subsets is
called a {\em local $\pi$-base at $x\in X$} if every neighborhood of
$x$ contains a set in the collection. It is an easy observation that a
collection of open subsets of $X$ that is a local $\pi$-base at every
point in some dense subset of $X$ is a $\pi$-base of $X$. The
following lemma is well known (the second statement is the famous
Birkhoff-Kakutani metrization theorem).

\begin{lemma}\label{pib}Every topological group with a countable
local $\pi$-base at any point is first countable and every first countable topological
group is metrizable.
\end{lemma}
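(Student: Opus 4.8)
The plan is to handle the two assertions separately, as the second is the classical Birkhoff--Kakutani theorem and I would simply invoke it. For the first, let $G$ be a topological group with a countable local $\pi$-base $\{P_n:n\in\omega\}$ at a point $x_0$. Since left and right translations are homeomorphisms of $G$, a countable local $\pi$-base at one point is transported to a countable local $\pi$-base at any other point; in particular we may assume $x_0=e$ is the identity. It then suffices to produce a countable neighborhood base at $e$, since first countability at $e$ yields first countability everywhere by homogeneity.

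The key step is a translation trick. Fix once and for all a point $p_n\in P_n$ for every $n$, and set $Q_n=p_n^{-1}P_n$. Each $Q_n$ is open (a translate of the open set $P_n$) and contains $e=p_n^{-1}p_n$, so $\{Q_n:n\in\omega\}$ is a countable family of neighborhoods of $e$. I claim it is a base. Given an open $U\ni e$, use continuity of the group operations to choose an open $V\ni e$ with $V=V^{-1}$ and $VV\subseteq U$. Because $\{P_n\}$ is a local $\pi$-base at $e$, some $P_n\subseteq V$; then $p_n^{-1}\in V^{-1}=V$ and $P_n\subseteq V$, whence $Q_n=p_n^{-1}P_n\subseteq VV\subseteq U$. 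Thus $G$ is first countable.

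For the second assertion, a first countable topological group has a countable base at $e$, which one thins to a decreasing sequence of symmetric open neighborhoods $V_n$ of $e$ satisfying $V_{n+1}V_{n+1}V_{n+1}\subseteq V_n$; the standard Birkhoff--Kakutani chain construction then yields a left-invariant continuous pseudometric compatible with the topology, and this is a genuine metric because the ambient regularity hypothesis forces $G$ to be Hausdorff, hence $T_1$. I do not expect any real obstacle here: the only point requiring a little care is the verification, in the translation trick, that each $Q_n$ is simultaneously open and a neighborhood of $e$ and that the inclusion $Q_n\subseteq U$ follows from $P_n\subseteq V$ together with the symmetry of $V$ — all of which is immediate once translations are recognized as homeomorphisms.
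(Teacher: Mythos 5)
Your proof is correct and is the standard argument: the paper itself gives no proof of this lemma, merely noting that it is well known (the second half being Birkhoff--Kakutani), and your translation trick $Q_n=p_n^{-1}P_n$ together with a symmetric $V$ satisfying $VV\subseteq U$ is exactly the usual way the first half is established. No gaps.
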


The {\em countable sequential fan $S(\omega)$} is defined as the set
$\omega^2\cup\{\omega\}$ equipped with the topology in which every
$(n,i)\in\omega^2$ is isolated and the basic neighborhoods of $\omega$
are $U_f=\set(n,i):i\geq f(n).$ where $f:\omega\to\omega$.

\begin{definition}\label{pQ}Let $X$ be a topological space. Let $x\in
X$ and $\seq D_n$ be a collection of infinite countable
closed discrete subsets of $X$ such that for every open $U\subseteq X$
such that $x\in U$ there exists an $n\in\omega$ such that $U\cap D_n$ is infinite. Then
$Y=\cup\set D_n:n\in\omega.\cup\{x\}$ is called a {\em \pQ\ subspace
of $X$}.
\end{definition}
The utility of the previous definition is illustrated by the following lemma.
\begin{lemma}\label{pQns}If $X$ has a \pQ\ subspace, $X\times
S(\omega)$ is not sequential.
\end{lemma}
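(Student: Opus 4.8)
The plan is to produce a subset $A\subseteq X\times S(\omega)$ that is sequentially closed but not closed; by the definition of sequentiality recalled above, this is exactly what it means for $X\times S(\omega)$ to fail to be sequential. Fix a \pQ\ subspace $Y=\cup\set D_n:n\in\omega.\cup\{x\}$ as in Definition~\ref{pQ} and, for each $n$, an injective enumeration $D_n=\set d^k_n:k\in\omega.$, and put $A=\set (d^k_n,(n,k)):n,k\in\omega.$. First I would check that $(x,\omega)\in\cl A\setminus A$: the point $(x,\omega)$ is not in $A$ since every point of $A$ has second coordinate in $\omega^2$, and to see $(x,\omega)\in\cl A$ it suffices to meet an arbitrary basic neighbourhood $U\times U_f$ of $(x,\omega)$ (with $x\in U$ open in $X$ and $f:\omega\to\omega$); by the defining property of a \pQ\ subspace there is an $n$ with $U\cap D_n$ infinite, hence a $k\ge f(n)$ with $d^k_n\in U$, and then $(d^k_n,(n,k))\in(U\times U_f)\cap A$.

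Next I would show that $A$ is sequentially closed, which is the substance of the proof. The key auxiliary fact is the standard observation that if a sequence $\langle(n_j,k_j):j\in\omega\rangle$ in $\omega^2$ converges to $\omega$ in $S(\omega)$, then the set $\set n_j:j\in\omega.$ is finite: otherwise one may pass to a subsequence along which the $n_j$ are pairwise distinct and let $f(n_j)=k_j+1$ (with $f$ zero elsewhere), so that $U_f$ is a neighbourhood of $\omega$ omitting infinitely many terms of the sequence. Now suppose $\langle(d^{k_j}_{n_j},(n_j,k_j)):j\in\omega\rangle$ is a sequence in $A$ converging to some $(y,s)\in X\times S(\omega)$. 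If $s$ is isolated, say $s=(m,i)\in\omega^2$, then the second coordinates are eventually equal to $(m,i)$, so the sequence is eventually the constant point $(d^i_m,(m,i))$ and $(y,s)=(d^i_m,(m,i))\in A$. If $s=\omega$, the auxiliary fact gives a value $n^\ast$ attained by $n_j$ for infinitely many $j$; restricting to that subsequence we have $n_j\equiv n^\ast$, and since the second coordinates still converge to $\omega$ we must have $k_j\to\infty$, so after a further thinning the $k_j$ are pairwise distinct. But then $\langle d^{k_j}_{n^\ast}:j\in\omega\rangle$ is a sequence of pairwise distinct points of $D_{n^\ast}$ converging to $y$ in $X$, which is impossible because $D_{n^\ast}$ is closed and discrete in $X$ and so has no limit point. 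Hence the case $s=\omega$ cannot occur, and $A$ is sequentially closed.

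Combining the two parts, $\cl A\setminus A\ne\varnothing$ while no convergent sequence contained in $A$ has its limit outside $A$, so $X\times S(\omega)$ is not sequential. I expect the only genuinely delicate step to be the final case of the sequential-closedness argument, where one must use the description of convergent sequences in the fan $S(\omega)$ together with the fact that each $D_n$ is closed and discrete in the whole space $X$ (and not merely in the subspace $Y$); the remaining verifications are routine bookkeeping.
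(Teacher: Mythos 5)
Your proof is correct and follows essentially the same route as the paper: you form the same diagonal set $A=\set(d^n_i,(n,i)):i,n\in\omega.$, verify $(x,\omega)\in\cl{A}\setminus A$ from the defining property of a \pQ\ subspace, and rule out convergent sequences in $A$ by splitting according to whether the second coordinates concentrate on one spine of $S(\omega)$ (contradicting that $D_n$ is closed discrete in $X$) or form a ``diagonal'' set meeting each spine finitely (which cannot converge in $S(\omega)$). Your write-up merely spells out the details that the paper's one-line case analysis leaves implicit.
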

\begin{proof}
Let $Y=\cup\set D_n:n\in\omega.\cup\{x\}$ be a \pQ\ subset of $X$
where $x\in X$ and $D_n=\set d^n_i:i\in\omega.$ be as in
Definition~\ref{pQ}. Define 
$$
A=\cup\set\cup{\set
(d^n_i,(n,i)):i\in\omega.}:n\in\omega.\subseteq X\times
S(\omega).
$$ 
Now $(x,\omega)\in\cl{A}\setminus A$ but there is no infinite
$S\subseteq A$ such that $S\to y$ for some $y\in X$. Indeed, otherwise
the projection $\pi_2(S)\subseteq S(\omega)$ contains an infinite
`diagonal' convergent subsequence in $S(\omega)$ or one of the closed
discrete subspaces $\set(d^n_i,(n,i)):i\in\omega.$ contains an
infinite convergent subsequence.
\end{proof}

The following lemma is a corollary of Lemma~16 and Corollary~2
of \cite{Sh1}.

\begin{lemma}\label{nwdG}Let $\tau$ be a sequential group topology on
$\N$ such that $\so(\tau)<\omega_1$. If $\set N_i:i\in\omega.$ is a
collection of nowhere dense subsets of $\N$ there exists an
$S\subseteq\N$ such that $S\to x$ for some $x\in\N$ and each $S\cap
N_i$ is finite.
\end{lemma}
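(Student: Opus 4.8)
The plan is to start with two harmless reductions. First, replace each $N_i$ by $\cl{N_0\cup\dots\cup N_i}$; this does not change the problem, since a finite union of nowhere dense sets has nowhere dense closure and a sequence meeting each $\cl{N_0\cup\dots\cup N_i}$ finitely meets each original $N_i$ finitely. So we may assume the $N_i$ are closed and increasing. Under this assumption, asking that $S\cap N_i$ be finite for all $i$ is the same as asking that $S$ be eventually contained in each of the decreasing dense open sets $U_i=\N\setminus N_i$; enumerating $S=\set s_k:k\in\omega.$, it suffices to produce a convergent sequence with $s_k\notin N_k$ for every $k$, since then (by nesting) $\{k:s_k\in N_i\}\subseteq\{0,\dots,i-1\}$. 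Second, if $\tau$ is discrete there is nothing to prove, so we may assume $(\N,\tau)$ has no isolated points and hence, being sequential and non-discrete, carries nontrivial convergent sequences.

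Before the main argument it is worth recording why the hypothesis $\so(\tau)<\omega_1$ is genuinely needed. A single closed nowhere dense $N$ is trivial to avoid: fix any nontrivial convergent sequence $T\to y$; an infinite subset of $T$ can lie in the closed set $hN$ only if its limit does, so $T\cap hN$ is infinite only for $h$ in the proper subset $yN^{-1}$ of $\N$ (proper because $N\not=\N$), and for any $h\notin yN^{-1}$ the translate $h^{-1}T\to h^{-1}y$ meets $N$ finitely. Since the $N_i$ are nested, this handles any finite subfamily at once, but it collapses for the full family because $\bigcup_i yN_i^{-1}$ can exhaust the countable group; some mechanism beyond translation is required, and that is exactly what $\so(\tau)<\omega_1$ supplies.

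For the countable family the plan is to feed the nested dense open sets $U_i$ into the recursion provided by \cite[Lemma~16 and Corollary~2]{Sh1}. Because $(\N,\tau)$ is a non-discrete sequential group with $\so(\tau)=\gamma<\omega_1$, those results extract, for a suitable base point $x$ and the decreasing dense open family $\seq U_i$, a convergent sequence $S\to x$ threading through all the $U_i$: the sequential order being a fixed countable ordinal bounds the depth of the underlying construction (at each level one pulls a convergent sequence out of a dense open set, using that below level $\gamma$ a sequential closure is reached in countably many steps), and the group operation lets the countably many partial sequences so produced be amalgamated into one. Concretely, one builds $S$ in blocks: at stage $k$ one has fixed a finite initial segment and a neighborhood $V_k$ of the eventual limit within which later terms must lie, applies the extraction lemma inside $V_k\cap U_k$ to add a block of terms avoiding $N_k$ and to shrink to $V_{k+1}$, and the nesting of the $N_i$ then yields $S\cap N_i$ finite for every $i$.

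The main obstacle is precisely this amalgamation of countably many locally chosen convergent sequences into a single convergent sequence. In a general sequential space it is impossible — that is the failure displayed by the sequential fan $S(\omega)$, and, more pertinently here, by a \pQ\ subspace as in Definition~\ref{pQ} and Lemma~\ref{pQns}, where no convergent sequence escapes the meager set $\bigcup_i N_i$. What rescues the situation is the conjunction of the group structure (translating and composing partial sequences to keep them inside shrinking neighborhoods of a common limit) with the bound $\so(\tau)<\omega_1$ (which keeps the recursion of countable length and rules out the $\omega_1$-deep branching a \pQ\ configuration would impose); this is the substance of \cite[Lemma~16 and Corollary~2]{Sh1}, and the present lemma drops out of that machinery once the reductions above are in place.
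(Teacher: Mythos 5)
The paper does not actually prove Lemma~\ref{nwdG}: it is presented as a corollary of Lemma~16 and Corollary~2 of \cite{Sh1}, with no argument given. Your proposal bottoms out in exactly the same citation, so at the level of where the mathematical content lives the two arguments coincide and there is nothing in the paper against which your elaboration can be checked. The preliminary reductions you make are correct but routine: passing to the closed increasing sets $\cl{N_0\cup\dots\cup N_i}$, discarding the discrete case (legitimately, by homogeneity of the group), and reformulating the conclusion as eventual containment in the dense open sets $U_i=\N\setminus N_i$; your translation argument disposing of a single closed nowhere dense set is also fine. None of this touches the actual difficulty.

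The one place where you try to be concrete is the place where the argument, taken literally, would fail. Building $S$ in blocks, with the $k$-th block chosen inside $V_k\cap U_k$ for a shrinking chain of neighborhoods $V_k$ of the intended limit, cannot by itself produce a convergent sequence: the interesting case is precisely when the group is not first countable (otherwise the lemma is an immediate diagonalization over a neighborhood base), so no countable chain of neighborhoods certifies convergence, and a set meeting infinitely many distinct convergent sequences need not converge --- this is exactly the $S(\omega)$ obstruction you yourself point out. You acknowledge the gap and defer the amalgamation step back to \cite{Sh1}, but you never state what Lemma~16 or Corollary~2 of \cite{Sh1} actually assert, so a reader cannot verify that they deliver a convergent sequence threading all the $U_i$. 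In short, your proposal is a correct framing plus the same external citation the paper relies on; it is not an independent proof, and its only novel concrete step is one that does not work without the uncited machinery.
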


\section{Analytic and other classes of spaces}

In the arguments below, we shall assume that $\tau$ stands for some
analytic topology on a countable set $X$. To simplify the notation we
will assume that $X=\N$ whenever it is convenient. All the references to
topological operations and properties such as convergence, etc.\ are
relative to this topology.

We shall also fix a subtree $T$ of $\tNN$ (see \cite{T1} for the
definition of the tree order) that defines $\tau$, i.e.\ 
such that $U=\pi_1(f)$ for some branch $f$ of $T$ whenever $U\in\tau$
is infinite. Given a
$P\subseteq2^\N$ and a $\sigma\in T$ we will use the notation 
$$
\fc\sigma|P=\cap\set\pi_1(f):f\hbox{ is a branch of }T\hbox{
  that extends $\sigma$ such that }\pi_1(f)\in P.
$$
if such $f$ exist; otherwise we put $\fc\sigma|P=\varnothing$.

\begin{lemma}\label{dom}Let $P\subseteq2^\N$ and $S\to x$ for some
  infinite $S\subseteq\N$. Suppose there is an
  open $U\ni x$ such that $U\in P$. Then there exists a $\sigma\in T$
  such that $S\ain\fc\sigma|P$.
\end{lemma}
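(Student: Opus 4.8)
The plan is to argue by contradiction: from a failure of the conclusion I want to build a single branch $h$ of $T$ with $x\in\pi_1(h)$ but with $\pi_1(h)$ omitting infinitely many members of $S$, which is impossible since $\pi_1(h)$ is then an open neighbourhood of $x$ and $S\to x$. So suppose $S\not\ain\fc\sigma|P$ for every $\sigma\in T$. As $S$ is an infinite convergent sequence, $x$ is not isolated, so $U$ (and every neighbourhood of $x$) is infinite and hence $U=\pi_1(g)$ for some branch $g$ of $T$. Let $B_P$ be the set of branches $f$ of $T$ with $\pi_1(f)\in P$; then $g\in B_P$. Unwinding the definition of $\fc\sigma|P$, the assumption says precisely that whenever $\sigma$ lies on some branch from $B_P$, the set $S_\sigma=\{\,s\in S:s\notin\pi_1(f)\text{ for some }f\in B_P\text{ with }\sigma\subseteq f\,\}$ is infinite (for the remaining $\sigma$ we have $\fc\sigma|P=\varnothing$ and there is nothing to check).

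For the construction I would exploit the feature of the tree order on $\tNN$ that passing to a successor only introduces natural numbers larger than those already present; writing $m_\sigma=\max\pi_1(\sigma)$, this yields that $\sigma\subseteq\sigma'$ forces every element of $\pi_1(\sigma')\setminus\pi_1(\sigma)$ to exceed $m_\sigma$, so an $s\le m_\sigma$ with $s\notin\pi_1(\sigma)$ stays out of the first coordinate of every extension of $\sigma$. Now I would recursively build nodes $\sigma_0\subseteq\sigma_1\subseteq\cdots$ of $T$, each lying on a branch from $B_P$: start with $\sigma_0=g\restriction n_0$ chosen so that $x\in\pi_1(\sigma_0)$, and, given $\sigma_k$ with $L_k=\{\,s\in S:s\le m_{\sigma_k}\text{ and }s\notin\pi_1(\sigma_k)\,\}$ finite, use the infinitude of $S_{\sigma_k}$ to pick $s\in S_{\sigma_k}$ with $s>m_{\sigma_k}$, a branch $f\in B_P$ with $\sigma_k\subseteq f$ and $s\notin\pi_1(f)$, and (since $\pi_1(f)$ is an infinite open set) a level $n$ with $m_{f\restriction n}\ge s$; put $\sigma_{k+1}=f\restriction n$, a proper extension of $\sigma_k$ with $s\in L_{k+1}\setminus L_k$. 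By the remark above $L_k\subseteq L_{k+1}$ at every step, so (terminating with an arbitrary branch through $\sigma_k$ if some $L_k$ is already infinite) I obtain a branch $h\supseteq\sigma_0$ with $x\in\pi_1(\sigma_0)\subseteq\pi_1(h)$ and $\bigcup_kL_k\subseteq S\setminus\pi_1(h)$ infinite, which is the desired contradiction with $S\to x$ since $\pi_1(h)\in\tau$.

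I expect the one genuinely load-bearing point to be the clause about the tree order: it is exactly what keeps the finitely many members of $S$ already excluded from being reinstated when, at the next step, one follows a fresh branch of $B_P$ in order to exclude one more. Lacking such a feature, the hypothesis $S\not\ain\fc\sigma|P$ only supplies, for each $\sigma$, a separate branch for each omitted point of $S$, with no reason for those branches to agree on any long initial segment; so the writeup will need the concrete description of $\tNN$ and its tree order from \cite{T1} (in particular that $\pi_1$ of a branch is always an infinite open set and that $m_\sigma$ is nondecreasing along branches), not merely abstract properties of analytic sets.
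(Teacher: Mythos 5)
Your proof is correct and follows essentially the same route as the paper's: both argue by contradiction, recursively build an increasing chain of nodes of $T$, each lying on a branch whose first projection is in $P$ and each excluding one new point of $S$ below the running maximum (which the tree order of $\tNN$ then keeps excluded forever), and derive a contradiction from the union being a branch whose first projection is an open neighbourhood of $x$ missing infinitely many points of $S$. Your explicit isolation of the load-bearing tree-order feature is exactly the point the paper's conditions (1)--(3) encode implicitly.
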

\begin{proof}
Let $f$ be a branch of $T$ such that $\pi_1(f)=U$. Pick $\sigma_{-1}\in
T$ such that $f$ extends $\sigma_{-1}$ and $x\in\pi_1(\sigma_{-1})$. Suppose
no $\sigma\in T$ with the property stated in the lemma exists. Using
this one can inductively construct a sequence $\seq \sigma_i$ of
elements of $T$ and a strictly increasing sequence $\seq n_i\subseteq
S$ such that for every $i\in\omega$
\begin{itemize}
\item[(1)] $\sigma_{i+1}$ extends $\sigma_i$;

\item[(2)] $n_i<\max\pi_1(\sigma_i)$ and $n_i\not\in\pi_1(\sigma_i)$;

\item[(3)] there exists a branch $f_i$ of $T$ that extends $\sigma_i$
  such that $\pi_1(f_i)\in P$.

\end{itemize}
Put $f_{-1}=f$. Let $i\in\omega\cup\{-1\}$ and note that (3) holds for
$i=-1$. By the assumption and~(3)
$S\not\ain\fc\sigma_i|P\not=\varnothing$ so one can pick a branch $f_{i+1}$ of
$T$ that extends $\sigma_i$ such that $n_{i+1}\not\in\pi_1(f_i)$ for
some $n_{i+1}\in S$, $n_{i+1}>n_i$. Let $\sigma_{i+1}$ be such that
$f_{i+1}$ extends $\sigma_{i+1}$ and $\max\sigma_{i+1}>\max\sigma_i$,
$\max\sigma_{i+1}>n_{i+1}$. 

Put $f'=\cup\seq\sigma_i$. Then $S\not\ain\pi_1(f')\ni x$, a
contradiction.
\end{proof}

As usual, a set function $F:2^X\to2^X$ will be called {\em monotone\/}
if $F(A)\subseteq F(B)$ whenever $A\subseteq B$.
\begin{lemma}\label{thin}Let $\set Q_\alpha:\alpha\in\omega_1.$ be such that each
  $Q_\alpha\subseteq2^\N$ and $Q_\beta\subseteq Q_\alpha$ when
  $\beta\leq\alpha$. Put $P_\alpha=\set
  B\subseteq\N:q\setminus F(B)\not=\varnothing\hbox{ for every }q\in
  Q_\alpha.$ where $F:2^\N\to 2^\N$ is a monotone set function. 
  Then there exists a $\gamma\in\omega_1$ such that
  $q\not\subseteq F(\fc\sigma|{P_{\gamma'}})$ for any $q\in\cup\set
  Q_\alpha:\alpha\in\omega_1.$ and any $\gamma'\geq\gamma$.
\end{lemma}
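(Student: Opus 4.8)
The plan is to exploit that $F$ is monotone to see that each family $P_\alpha$, although not a filter, is closed under arbitrary intersections; consequently $\fc\sigma|{P_\alpha}$ — which by its very definition is either $\varnothing$ or an intersection of members of $P_\alpha$ — always belongs to $P_\alpha$, and then a stabilization argument run over the countably many nodes of $T$ delivers $\gamma$. I would begin with two elementary observations. First, if $B_i\in P_\alpha$ for all $i$ in some index set then $\bigcap_iB_i\in P_\alpha$: given $q\in Q_\alpha$ and any single index $j$ we have $\bigcap_iB_i\subseteq B_j$, hence $F(\bigcap_iB_i)\subseteq F(B_j)$ by monotonicity, so $q\setminus F(\bigcap_iB_i)\supseteq q\setminus F(B_j)\neq\varnothing$; the same computation with $B_j$ replaced by any $B\in P_\alpha$ shows $\varnothing\in P_\alpha$. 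Second, $\beta\leq\alpha$ gives $Q_\beta\subseteq Q_\alpha$ and therefore $P_\alpha\subseteq P_\beta$. Combining these, for every $\sigma\in T$ and every $\alpha<\omega_1$ the set $\fc\sigma|{P_\alpha}$ lies in $P_\alpha$ — it equals $\varnothing$ when no branch of $T$ extending $\sigma$ has its first coordinate in $P_\alpha$, and otherwise it is an intersection of members of $P_\alpha$ — so
$$
q\setminus F(\fc\sigma|{P_\alpha})\neq\varnothing\qquad\text{for every }q\in Q_\alpha.
$$

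Next I would fix $\sigma\in T$ and show that $\alpha\mapsto\fc\sigma|{P_\alpha}$ is eventually constant. Since $P_\alpha$ decreases with $\alpha$, the collection of branches $f$ of $T$ extending $\sigma$ with $\pi_1(f)\in P_\alpha$ decreases as well; hence as long as this collection is nonempty the sets $\fc\sigma|{P_\alpha}$ form a weakly increasing chain of subsets of the countable set $\N$, and if the collection ever becomes empty it stays empty, so $\fc\sigma|{P_\alpha}=\varnothing$ from that stage onward. In either case the values stabilize: there is a countable ordinal $\gamma_\sigma$ and a set $B_\sigma$ with $\fc\sigma|{P_\alpha}=B_\sigma$ for all $\alpha\geq\gamma_\sigma$. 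Because $T\subseteq\tNN$ is countable, $\gamma:=\sup\set\gamma_\sigma:\sigma\in T.$ is a countable ordinal, and $\fc\sigma|{P_{\gamma'}}=B_\sigma$ for every $\sigma\in T$ and every $\gamma'\geq\gamma$.

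Finally, given any $q\in\cup\set Q_\alpha:\alpha\in\omega_1.$, choose $\alpha_q<\omega_1$ with $q\in Q_{\alpha_q}$ and put $\delta=\max(\gamma,\alpha_q)$. Then $q\in Q_\delta$ and $\fc\sigma|{P_\delta}=B_\sigma$, so the displayed property applied at the stage $\delta$ gives $q\setminus F(B_\sigma)\neq\varnothing$; since $\fc\sigma|{P_{\gamma'}}=B_\sigma$ for all $\gamma'\geq\gamma$, this is exactly the assertion that $q\not\subseteq F(\fc\sigma|{P_{\gamma'}})$ for every such $\gamma'$ and every $\sigma\in T$, as required. The step I expect to require the most care is the stabilization in the second paragraph: the convention that $\fc\sigma|P=\varnothing$ when $\sigma$ has no relevant extension breaks the naive monotonicity of $\alpha\mapsto\fc\sigma|{P_\alpha}$, so one must argue explicitly that this collapse to $\varnothing$ can occur only on a final segment of $\omega_1$ and hence does not prevent the value from being eventually attained. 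The closure of $P_\alpha$ under arbitrary intersections, though brief, is the conceptual heart — it is what makes it legitimate to feed $\fc\sigma|{P_\alpha}$ back into the defining condition of $P_\alpha$.
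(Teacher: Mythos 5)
Your proof is correct, but it takes a genuinely different route from the paper's. Both arguments rest on the same consequence of monotonicity --- $P_\alpha$ is downward closed, so $\fc\sigma|{P_\alpha}$, being contained in $\pi_1(f)$ for any witnessing branch $f$, again lies in $P_\alpha$ --- but they exploit it differently. The paper argues by contradiction: if some $q\in Q_\alpha$ satisfies $q\subseteq F(\fc\sigma|{P_\beta})$, then no branch $f$ through $\sigma$ can have $\pi_1(f)\in P_\alpha$, since such an $f$ would give $\pi_1(f)\in P_\alpha\subseteq P_\beta$, hence $\pi_1(f)\supseteq\fc\sigma|{P_\beta}$ and $F(\pi_1(f))\supseteq q$, contradicting $\pi_1(f)\in P_\alpha$; thus each $\sigma$ contributes at most one ``first bad stage,'' and countability of $T$ bounds these. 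You instead establish the diagonal inequality $q\setminus F(\fc\sigma|{P_\alpha})\neq\varnothing$ for all $q\in Q_\alpha$ at \emph{every} stage, then stabilize the transfinite sequence $\alpha\mapsto\fc\sigma|{P_\alpha}$ using the fact that a weakly increasing $\omega_1$-chain of subsets of the countable set $\N$ is eventually constant (with the collapse to $\varnothing$ correctly handled as occurring only on a final segment), and finally transfer the inequality from the stage $\max(\gamma,\alpha_q)$ to all $\gamma'\geq\gamma$. Your route is more constructive and yields the extra information that $\gamma$ may be taken to be a simultaneous stabilization point of all the sets $\fc\sigma|{P_\alpha}$; the paper's route avoids any appeal to the countability of the underlying set (only $T$ need be countable). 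One shared caveat: both proofs tacitly assume $\varnothing\in P_\alpha$ --- equivalently, that no $q\in\cup\set Q_\alpha:\alpha\in\omega_1.$ is contained in $F(\varnothing)$ --- and your derivation of $\varnothing\in P_\alpha$ presupposes $P_\alpha\neq\varnothing$; but if this fails the lemma itself is false (such a $q$ violates the conclusion at every stage), and in the paper's applications $F(\varnothing)=\varnothing$, so nothing is lost.
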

\begin{proof}
Since $T$ is countable, it is enough to show that
$\fc\sigma|{P_\alpha}=\varnothing$ whenever there is a $q\in
Q_\alpha$ such that $q\subseteq F(\fc\sigma|{P_\beta})$ for some
$\beta<\omega_1$. Assuming such a
$q$ exists, suppose $\fc\sigma|{P_\alpha}\not=\varnothing$. Then
$\alpha>\beta$, otherwise $q\in Q_\alpha\subseteq Q_\beta$ and
$F(\fc\sigma|{P_\beta})\setminus q\not=\varnothing$, since
each $P_\beta$ is closed under taking subsets, which contradicts $q\subseteq
F(\fc\sigma|{P_\beta})$. 

There exists a branch $f$ of $T$ that extends $\sigma$ such that
$\pi_1(f)\in P_\alpha\subseteq P_\beta$ so
$q\setminus F(\pi_1(f))\not=\varnothing$. Now
$F(\pi_1(f))\supseteq F(\fc\sigma|{P_\beta}\cap\pi_1(f))=F(\fc\sigma|{P_\beta})\supseteq
q$, a contradiction.
\end{proof}

\begin{lemma}\label{pialt}Let $\tau$ be an analytic sequential topology on
  $\N$. Then there exists a countable family
  $\cal U$ of open in $\tau$ sets and a countable
  family\/ $\Xi$ of nowhere dense subsets of $\N$ such that at least one of the
  following alternatives holds for every $x\in\N$:
  \begin{itemize}
  \item[\rm(1)]for any infinite sequence $S\subseteq\N$ such that
  $S\to x$ there is a $\xi\in\Xi$ such that $S\ain\xi$;

  \item[\rm(2)]$\cal U$ is a local $\pi$-base at $x$.

  \end{itemize}
\end{lemma}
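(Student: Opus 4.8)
The plan is to build $\mathcal U$ and $\Xi$ from the tree $T$ together with a transfinite "thinning" of candidate open sets, and then to show that if alternative (1) fails at a point $x$, alternative (2) must hold there. First I would fix, for each $x\in\N$, an enumeration of all pairs $(\sigma, V)$ where $\sigma\in T$ and $V$ ranges over a countable base-like family of basic open sets of the product topology on $2^\N$; the key raw material is Lemma~\ref{dom}, which says that whenever $S\to x$ and some open $U\ni x$ lies in a given $P\subseteq 2^\N$, there is a $\sigma\in T$ with $S\ain\fc\sigma|P$. So the sets $\fc\sigma|P$ are the natural candidates to serve either as members of $\mathcal U$ (when they happen to be open) or, after passing to a suitable witness, as the nowhere dense sets in $\Xi$.

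Next I would invoke Lemma~\ref{thin} with an appropriate monotone set function $F$ and an increasing $\omega_1$-sequence $\seq Q_\alpha$ of subfamilies of $2^\N$ to extract a single countable stage that "stabilizes" the construction. The natural choice is to let $F$ be (iterated) sequential closure, or its restriction to sequences, so that $F(B)$ records which points $B$ can reach by convergent sequences; the families $Q_\alpha$ should enumerate, in a monotone way, the convergent sequences (or their tails) that one is trying to capture. Lemma~\ref{thin} then yields a countable $\gamma$ beyond which no $q$ from any $Q_\alpha$ is contained in $F(\fc\sigma|{P_{\gamma'}})$ — i.e., past stage $\gamma$ the sets $\fc\sigma|{P_\gamma}$ can no longer "swallow" a convergent sequence through $F$. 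Having fixed this $\gamma$, I would let $\mathcal U$ consist of those $\fc\sigma|{P_\gamma}$ that are open (together with whatever basic open sets are needed to make $\mathcal U$ countable and witness local $\pi$-bases), and let $\Xi$ be a countable family of nowhere dense sets manufactured from the remaining $\fc\sigma|{P_\gamma}$ — specifically, for each $\sigma$ for which $\fc\sigma|{P_\gamma}$ is not open (equivalently, has empty interior around the relevant points), its closure, or a countable cofinal family of such closures, will be nowhere dense by the stabilization property.

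Then comes the dichotomy. Fix $x\in\N$ and suppose (1) fails: there is an infinite $S\to x$ with $S\not\ain\xi$ for every $\xi\in\Xi$. I would show (2) holds, i.e.\ $\mathcal U$ is a local $\pi$-base at $x$: given any open $W\ni x$, apply Lemma~\ref{dom} to $S$ and $W$ (with $P=P_\gamma$, after checking $W\in P_\gamma$, which is where the choice of $F$ and the failure of (1) are used) to get $\sigma\in T$ with $S\ain\fc\sigma|{P_\gamma}$; the stabilization from Lemma~\ref{thin} forces $\fc\sigma|{P_\gamma}$ to be open — if it were not, its closure would be one of the nowhere dense sets in $\Xi$ containing $S$ up to a finite set, contradicting the failure of (1) — and by construction $\fc\sigma|{P_\gamma}\subseteq W$, so $\mathcal U$ contains a member inside $W$.

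The main obstacle I anticipate is getting the bookkeeping in Lemma~\ref{thin} to line up exactly with what the dichotomy needs: choosing $F$ and the $Q_\alpha$ so that (a) "$W\in P_\gamma$" is automatic for open $W$ at a point where (1) fails, (b) "$\fc\sigma|{P_\gamma}$ is not open" really does feed a nowhere dense set into $\Xi$, and (c) the conclusion $q\not\subseteq F(\fc\sigma|{P_{\gamma'}})$ is strong enough to rule out non-openness along the convergent sequence $S$. Threading all three through a single monotone $F$ is the delicate point; everything else is a routine application of Lemmas~\ref{dom} and~\ref{thin} plus the observation that a collection which is a local $\pi$-base on a dense set is a $\pi$-base, together with countability of $T$.
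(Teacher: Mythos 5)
Your overall strategy---stabilize the sets $\fc\sigma|{P_\gamma}$ via Lemma~\ref{thin} and then use Lemma~\ref{dom} to trade the failure of one alternative for the other---is indeed the paper's strategy, but the two ingredients you leave unspecified are exactly where the proof lives, and the choices you sketch do not work. The paper takes $F(B)=\cl{B}$ and, crucially, defines the families \emph{recursively from the construction itself}: $Q_\alpha=\set\Int(\cl{\fc\sigma|{P_\beta}}):\sigma\in T,\ \beta<\alpha.$, so that $Q_\alpha$ collects the open witnesses to non-nowhere-density of the candidate sets from all earlier stages. With this bootstrapping, the conclusion of Lemma~\ref{thin} at stage $\gamma$ forces \emph{every} $\fc\sigma|{P_\gamma}$ to be nowhere dense (otherwise $q=\Int(\cl{\fc\sigma|{P_\gamma}})$ is a nonempty member of $Q_{\gamma+1}$ with $q\subseteq F(\fc\sigma|{P_\gamma})$, contradicting the choice of $\gamma$); hence all of them go into $\Xi$, while $\cal U$ is taken to be $Q_\gamma$ itself, the open sets harvested from earlier stages. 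Your version---$F$ a sequential-closure operator, $Q_\alpha$ an enumeration of convergent sequences, $\cal U$ the ``open'' sets $\fc\sigma|{P_\gamma}$ and $\Xi$ the closures of the ``non-open'' ones---misses this self-reference entirely, and it conflates ``not open'' with ``nowhere dense'': a non-open $\fc\sigma|{P_\gamma}$ can perfectly well have closure with nonempty interior, so nothing in your stabilization makes your $\Xi$ consist of nowhere dense sets.

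The dichotomy step has a second genuine gap. You fix $x$ where (1) fails and try to verify, for an \emph{arbitrary} open $W\ni x$, that $W\in P_\gamma$ so that Lemma~\ref{dom} applies. But with $F(B)=\cl{B}$, membership $U\in P_\gamma$ says precisely that no $q\in Q_\gamma$ is contained in $\cl{U}$, i.e.\ it is (essentially) a witness that ${\cal U}=Q_\gamma$ is \emph{not} a local $\pi$-base at $x$; it is therefore not ``automatic'' and cannot be used to establish alternative (2). This is why the paper argues in the direction $\neg(2)\Rightarrow(1)$: if no member of $Q_\gamma$ sits inside some neighborhood of $x$, regularity yields an open $U\ni x$ with $q\setminus\cl{U}\not=\varnothing$ for all $q\in Q_\gamma$, hence $U\in P_\gamma$, and Lemma~\ref{dom} then puts any convergent $S\to x$ almost inside some $\fc\sigma|{P_\gamma}\in\Xi$, which is alternative (1). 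Finally, Lemma~\ref{dom} only yields $S\ain\fc\sigma|{P_\gamma}$ for \emph{some} $\sigma\in T$; it gives no containment $\fc\sigma|{P_\gamma}\subseteq W$, so the concluding sentence of your dichotomy argument is unsupported even if the earlier steps were repaired.
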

\begin{proof}
Put $F(B)=\cl{B}$ and define $Q_\alpha=\set
\Int(\cl{\fc\sigma|{P_\beta}}):\sigma\in T,\ \beta<\alpha.$ where $P_\beta$ is defined as in
Lemma~\ref{thin}. Find $\gamma\in\omega_1$ as in
Lemma~\ref{thin}. It follows from the construction of $Q_\alpha$ that
every $\fc\sigma|{P_\gamma}$ is nowhere dense.

Put $P=P_\gamma$, $\Xi=\set\fc\sigma|{P_\gamma}:\sigma\in T.$, ${\cal
U}=Q_\gamma$, and let
$S\to x$ for some $x\in\N$. If $\cal U$ is not a local $\pi$-base at
$x$ there
exists an open $U\ni x$ such that $q\setminus\cl{U}\not=\varnothing$ for
every $q\in Q_\gamma$. Thus $U\in P_\gamma$ and Lemma~\ref{dom} implies
that there is a $\sigma\in T$ such that
$S\ain\fc\sigma|{P_\gamma}\in\Xi$.
\end{proof}
The following is an immediate corollary of Lemma~\ref{nwdG},
Lemma~\ref{pib}, and the lemma above. It answers Question~7.1
from~\cite{TU1}. Theorem~\ref{Mresu} below together with a result
in \cite{Sh2} can also be used to obtain this statement.
 
\begin{corollary}A countable topological group is metrizable if and only
if it has a sequential analytic topology with the sequential order
less than $\omega_1$.
\end{corollary}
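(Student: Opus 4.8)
The plan is to derive the corollary from Lemma~\ref{pialt} together with Lemma~\ref{nwdG} and Lemma~\ref{pib}. One direction is trivial: a metrizable countable space carries a topology that is $F_\sigma$, hence analytic, and is Fr\'echet, so $\so\leq 1<\omega_1$. For the nontrivial direction, suppose $\tau$ is a sequential analytic group topology on $\N$ with $\so(\tau)<\omega_1$. Apply Lemma~\ref{pialt} to obtain the countable family $\cal U$ of open sets and the countable family $\Xi$ of nowhere dense sets with the stated dichotomy at each point. By Lemma~\ref{pib} it suffices to show that $\cal U$ is a local $\pi$-base at \emph{some} point, since then the group is first countable (the $\pi$-character is the same at every point of a topological group, so a countable local $\pi$-base at one point gives one at every point), and a first countable topological group is metrizable.

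So I would argue that alternative~(1) of Lemma~\ref{pialt} cannot hold at every point. Suppose it does. Then for every $x\in\N$ and every convergent sequence $S\to x$ there is a $\xi\in\Xi$ with $S\ain\xi$. But $\Xi=\set N_i:i\in\omega.$ is a countable collection of nowhere dense subsets of $\N$, and $\so(\tau)<\omega_1$, so Lemma~\ref{nwdG} applies: there exists a convergent sequence $S\to x$ (for some $x\in\N$) with each $S\cap N_i$ finite, i.e.\ $S$ is not $\ain$ any member of $\Xi$. This directly contradicts alternative~(1) at that point $x$. Hence alternative~(2) holds at some $x\in\N$, i.e.\ $\cal U$ is a local $\pi$-base at $x$, and we conclude as above.

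The only mildly delicate point is the transfer of the countable local $\pi$-base from one point to all points and then the invocation of Birkhoff--Kakutani; both are packaged into Lemma~\ref{pib}, so the argument is essentially a bookkeeping assembly of the three lemmas. I expect the main conceptual obstacle to have been absorbed already into Lemma~\ref{pialt} (the thinning argument of Lemma~\ref{thin} that produces a \emph{fixed} countable $\Xi$ and $\cal U$ working uniformly over all $x$); once that is in hand, the corollary is a short deduction. I would also remark, as the authors do, that an alternative route runs through Theorem~\ref{Mresu} together with the result of \cite{Sh2} that every non-metrizable sequential group has $\so=\omega_1$, but the direct derivation from Lemma~\ref{pialt} is cleaner.
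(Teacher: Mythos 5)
Your proposal is correct and takes exactly the route the paper intends: the paper offers no written proof, calling the corollary an ``immediate'' consequence of Lemma~\ref{nwdG}, Lemma~\ref{pib}, and Lemma~\ref{pialt}, and your assembly (ruling out alternative~(1) of Lemma~\ref{pialt} at every point via Lemma~\ref{nwdG}, then invoking Lemma~\ref{pib} at the point where alternative~(2) holds) is precisely that deduction. The homogeneity transfer and Birkhoff--Kakutani step are, as you note, already packaged into Lemma~\ref{pib}, so nothing further is needed.
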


A lemma in \cite{DB} and a simple argument result in the following
corollary to Lemma~\ref{pialt}.

\begin{corollary}\label{piFrech}Every analytic Fr\'echet space has a
countable $\pi$-base.
\end{corollary}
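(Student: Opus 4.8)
The plan is to deduce Corollary~\ref{piFrech} from Lemma~\ref{pialt} by showing that for a Fr\'echet space the first alternative of that lemma is vacuous, so that the countable family $\cal U$ is automatically a local $\pi$-base at every point, hence a $\pi$-base of $X$. First I would recall that in a Fr\'echet space every point in the closure of a set is the limit of a sequence from that set, which is precisely the strengthening of sequentiality needed here. The reference to ``a lemma in \cite{DB}'' signals that the key extra ingredient is a result saying (roughly) that in an analytic Fr\'echet space, for each point $x$, the convergent sequences converging to $x$ cannot all be almost contained in a fixed countable family of nowhere dense sets --- equivalently, that alternative~(1) of Lemma~\ref{pialt} forces $x$ to be, in a suitable sense, a limit only through a ``small'' (meager-type) family of sequences, which is incompatible with the Fr\'echet property at $x$ when $x$ is not isolated.

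The key steps, in order, would be: (i) apply Lemma~\ref{pialt} to the analytic sequential topology $\tau$ on $\N$ (every analytic Fr\'echet space is in particular sequential and analytic), obtaining the countable families $\cal U$ and $\Xi$; (ii) fix a non-isolated point $x\in\N$ for which alternative~(1) holds, and derive a contradiction with the Fr\'echet property at $x$ using the lemma from \cite{DB}: since $X$ is Fr\'echet, $x$ has ``enough'' convergent sequences (for any $A$ with $x\in\cl A$ there is $S\subseteq A$, $S\to x$), and the cited lemma shows this supply of sequences cannot be covered, up to $\subseteq^*$, by the countably many nowhere dense sets in $\Xi$; (iii) conclude that alternative~(2) holds at every non-isolated $x$, and trivially at every isolated $x$ as well, so that $\cal U$ is a local $\pi$-base at every point of $\N$; (iv) invoke the ``easy observation'' already recorded in the text --- a collection that is a local $\pi$-base at every point of a dense set (here, all of $\N$) is a $\pi$-base --- to finish. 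The ``simple argument'' referenced in the paper is exactly this passage from local $\pi$-bases everywhere to a global $\pi$-base.

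The main obstacle is step~(ii): making precise and invoking the lemma from \cite{DB} that rules out alternative~(1) at non-isolated points of an analytic Fr\'echet space. One has to phrase the Fr\'echet hypothesis in the right combinatorial form --- for instance, picking for each $\xi\in\Xi$ a point $y_\xi$ outside $\xi$ and considering whether $x$ lies in the closure of $\set y_\xi:\xi\in\Xi.$ or, more likely, building from the nowhere density of the $\xi$'s a set $A$ with $x\in\cl A$ such that no sequence in $A$ converging to $x$ can be almost inside any single $\xi$, contradicting~(1). The role of \cite{DB} is presumably to supply precisely this kind of diagonalization against a countable family of nowhere dense (equivalently, meager) sets in the analytic setting. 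Everything else --- the application of Lemma~\ref{pialt}, the handling of isolated points, and the local-to-global $\pi$-base step --- is routine.
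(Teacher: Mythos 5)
Your overall strategy coincides with the paper's: apply Lemma~\ref{pialt} and use the result of \cite{DB} to rule out alternative~(1), so that the countable family $\cal U$ becomes a local $\pi$-base wherever that matters. However, both places where isolated points enter your argument are handled incorrectly, and the paper's proof is structured precisely to deal with them. First, in step~(iii) you assert that alternative~(2) holds ``trivially at every isolated $x$,'' so that $\cal U$ itself is a local $\pi$-base at every point of $\N$. That is backwards: at an isolated point $x$ no \emph{infinite} sequence converges to $x$, so it is alternative~(1) that holds vacuously there, while a local $\pi$-base at $x$ must contain the singleton $\{x\}$, which there is no reason for $\cal U$ to contain (take $X$ discrete to see the conclusion fail). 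The paper repairs this by explicitly adjoining the singletons of all isolated points; the enlarged family is a local $\pi$-base at every point of the dense set $U\cup P$, where $P$ is the set of isolated points and $U=X\setminus\cl{P}$, and hence is a $\pi$-base.

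Second, the result the paper quotes from \cite{DB} rules out alternative~(1) only in Fr\'echet spaces \emph{without isolated points}, so it cannot be invoked directly at an arbitrary non-isolated point of $X$ as you do in step~(ii). The paper avoids this by passing to the open subspace $U=X\setminus\cl{P}$, which has no isolated points, obtaining a countable $\pi$-base for $U$, and then adding the singletons from $P$. Your version can be repaired without that restriction: a non-isolated point of $\cl{P}$ is, by Fr\'echetness, the limit of an infinite sequence of isolated points, and such a sequence cannot be almost contained in any nowhere dense set (any set almost containing it has an isolated point in its closure's interior), so alternative~(1) fails there for elementary reasons; but some such case split is genuinely needed. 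With these two repairs your argument becomes essentially the paper's proof.
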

\begin{proof}
Let $X$ be an analytic Fr\'echet space and put $U=X\setminus\cl{P}$
where $P$ is the set of all isolated points of $X$. Observe that a
result in \cite{DB} shows that the first alternative of
Lemma~\ref{pialt} does not hold in Fr\'echet spaces without isolated
points so $U$ has a countable $\pi$-base $\cal B$. Adding all the
singletons from $P$ to $\cal B$ one obtains a countable $\pi$-base for $X$.
\end{proof}

A similar proof shows that the conclusion of Lemma~\ref{pialt} can be
sharpened for homogeneous spaces.

\begin{corollary}\label{dihom}Let $X$ be a homogeneous analytic
sequential space. Then $X$ has either a countable $\pi$-base or a
countable collection $\Xi$ of nowhere dense subsets such that
property~(1) of Lemma~\ref{pialt} holds at every $x\in X$.
\end{corollary}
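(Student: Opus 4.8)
The plan is to mimic the proof of Corollary~\ref{piFrech} but exploit homogeneity to upgrade "$\cal U$ is a local $\pi$-base at some dense set of points" into "$\cal U$ gives a $\pi$-base after a countable modification", while handling the two alternatives of Lemma~\ref{pialt} simultaneously. First I would apply Lemma~\ref{pialt} to the analytic sequential topology of $X$ (identifying the underlying set with $\N$), obtaining the countable family $\cal U$ of open sets and the countable family $\Xi$ of nowhere dense sets such that at each $x\in X$ either alternative~(1) or alternative~(2) holds. Let $D=\set x\in X:\text{alternative (2) holds at }x.$ and $E=X\setminus D$. If $D$ is dense in $X$, then since $\cal U$ is a local $\pi$-base at every point of a dense set, the observation recalled before Lemma~\ref{pib} shows $\cal U$ is a $\pi$-base of $X$, and we are in the first case of the corollary.

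The substantive case is when $D$ is not dense, i.e.\ $U_0=X\setminus\cl D$ is a nonempty open set on which alternative~(1) holds everywhere. Here I would use homogeneity: for each $x\in X$ choose a homeomorphism $h_x:X\to X$ carrying some fixed point of $U_0$ to $x$ (if $U_0=X$ there is nothing to do). Enumerating a countable dense set $\set x_k:k\in\omega.$ of $X$ and a countable base-like family witnessing that $U_0$ is "large enough", I would push the family $\Xi\cap U_0$ (more precisely, the traces of members of $\Xi$ on $U_0$, which are nowhere dense in $U_0$ hence in $X$) forward by the countably many chosen homeomorphisms to obtain a countable family $\Xi'$ of nowhere dense subsets of $X$. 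The point is that any convergent sequence $S\to x$ in $X$ can be translated by $h_x^{-1}$ into a convergent sequence inside $U_0$ landing at the fixed point, where alternative~(1) applies, and translating back shows $S\ain\xi$ for some $\xi\in\Xi'$. Thus property~(1) of Lemma~\ref{pialt} holds at every point of $X$ with respect to $\Xi'$, giving the second case.

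A subtlety I must be careful about: homogeneity gives homeomorphisms of $X$ onto itself, but $U_0$ need not be all of $X$, so I cannot directly translate $U_0$ onto a neighborhood of an arbitrary point — instead I translate a single interior point of $U_0$ to $x$ and note that a tail of $S$ (being eventually inside any neighborhood of $x$) pulls back under $h_x^{-1}$ into $U_0$, which is all that alternative~(1) needs. The main obstacle is exactly this bookkeeping: verifying that the countably many translates of the nowhere dense sets in $\Xi$ suffice for all points of $X$, i.e.\ that one can select countably many homeomorphisms $\set h_k:k\in\omega.$ so that for every $x\in X$ and every $S\to x$ some $h_k$ works — this follows because $S\ain\xi$ is a statement about almost all of $S$, and by density we can approximate $x$ well enough by some $x_k$ and use the homeomorphism $h_k$ together with the fact that the relation $\ain$ is tail-invariant, but making this precise requires choosing the $x_k$ and the neighborhoods compatibly. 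Once that is done, either $D$ is dense and $\cal U$ is a $\pi$-base, or $D$ is not dense and the translated family $\Xi'$ witnesses property~(1) everywhere, which is the dichotomy claimed.
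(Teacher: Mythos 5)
Your overall strategy is sound and is essentially the one the paper intends (the paper gives no explicit proof, only the remark that ``a similar proof shows'' the sharpening for homogeneous spaces): apply Lemma~\ref{pialt}, split according to whether the local $\pi$-base alternative occurs, and use homogeneity to transport either the $\pi$-base or the nowhere dense family to all of $X$. The first case ($D$ dense, so $\cal U$ is a $\pi$-base by the observation preceding Lemma~\ref{pib}) and the core of the second case (pull a sequence $S\to x$ back by a homeomorphism $h_x$ sending a fixed $x_0\in U_0$ to $x$, apply alternative~(1) at $x_0$, push forward) are both correct.

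The one step that does not work as you wrote it is your proposed resolution of the ``main obstacle.'' You suggest getting by with countably many homeomorphisms $h_k$ attached to a countable dense set $\set x_k:k\in\omega.$ and then handling an arbitrary $x$ by ``approximating $x$ well enough by some $x_k$'' and invoking tail-invariance of $\ain$. This fails: a homeomorphism carrying $x_0$ to $x_k$ carries $h^{-1}$-images of sequences converging to $x$ to sequences converging to $h^{-1}(x)$, which is not $x_0$ unless $x=x_k$, so alternative~(1) at $x_0$ cannot be applied to $h_k^{-1}(S)$; no amount of closeness of $x_k$ to $x$ in a countable non-metrizable space repairs this. The repair is trivial and you should simply use it: analytic spaces are by definition \emph{countable}, so you may choose one homeomorphism $h_x$ for \emph{every} $x\in X$ and set $\Xi'=\set h_x(\xi):x\in X,\ \xi\in\Xi.$, which is still countable. (The same remark streamlines the whole argument: you do not need the dichotomy ``$D$ dense versus not dense'' at all --- if alternative~(2) holds at even one point, then transporting $\cal U$ by the countably many $h_x$ and taking the union gives a countable $\pi$-base of $X$; otherwise alternative~(1) already holds everywhere with the original $\Xi$.)
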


A quick observation reveals that a disjoint union $\Q\cup S_\omega$ of
a copy of the rationals and the {\em Arkhangel'skii-Franklin space
$S_\omega$} (see \cite{TU1} for a nice definition of $S_\omega$ and
further references) does not satisfy the dichotomy of
Corollary~\ref{dihom}. Therefore the restrictions in the corollaries
above cannot be removed.

\begin{lemma}\label{pQseq}
Let $X$ be an analytic sequential space. Then $X$ is
a $k_\omega$-space or there exists a \pQ\ subspace of $X$.
\end{lemma}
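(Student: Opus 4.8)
The plan is to use Lemma~\ref{pialt} to split $X$ into a "good" part on which a countable family $\mathcal U$ of open sets is a local $\pi$-base at every point, and a "bad" part $B$ consisting of points $x$ where alternative (1) holds: every sequence converging to $x$ is almost contained in some member of a fixed countable family $\Xi$ of nowhere dense sets. The dichotomy I would aim for is: either $B$ is "small" in a sense that lets the $\pi$-base structure on the complement together with the analytic tree $T$ produce a $k_\omega$-decomposition, or $B$ is "large" — and in that case I would extract a \pQ\ subspace.

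First I would set up the $k_\omega$ side. Using the countable $\pi$-base-like family $\mathcal U$ coming from Lemma~\ref{pialt} (intersected/closed off under the operations $\sigma\mapsto\Int(\cl{\fc\sigma|P})$ as in the proof there), together with Lemma~\ref{azs}, I would try to show that on the good part the space is an $\aleph_0$-space; the family $\{A_n\}$ witnessing the $\aleph_0$-property can be built from the branches of $T$ via the operator $\fc\sigma|P$, since any sequence $S\to x$ inside an open $U\ni x$ satisfies $S\ain\fc\sigma|{P}$ for some $\sigma\in T$ by Lemma~\ref{dom}, and one controls the relevant closures through the thinning Lemma~\ref{thin}. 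Once $X$ is known to be a sequential $\aleph_0$-space, I would invoke the characterization of $\aleph_0$-spaces and then argue that the compact subsets are suitably controlled so that $X$ is in fact $k_\omega$ — here is where I expect to lean on the scatteredness/compact-subspace analysis hinted at in the introduction, and possibly on the group-theoretic corollary from \cite{tbana} if we are in that setting, though for the bare topological statement one wants a direct argument that the countably many compact sets $\cl{\fc\sigma|{P}}\cup(\text{finite})$ determine the topology.

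On the other side, suppose $X$ is not $k_\omega$. Then the "good" behaviour must fail, and I would produce, at some point $x\in X$, a countable family $\seq D_n$ of infinite closed discrete subsets of $X$ such that every neighbourhood of $x$ meets some $D_n$ in an infinite set — i.e.\ a \pQ\ subspace in the sense of Definition~\ref{pQ}. The $D_n$ should be chosen as (subsets of) branch-images $\pi_1(f_n)$, or rather as discrete sequences lying inside the nowhere dense sets $\fc\sigma|{P_\gamma}\in\Xi$, accumulating only at $x$; discreteness and closedness of each $D_n$ follows from the nowhere-density (and the failure of Fréchet-type behaviour at $x$, analogous to the use of \cite{DB} in Corollary~\ref{piFrech}). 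The condition that every open $U\ni x$ meets some $D_n$ infinitely is exactly the negation of "$\{U\cap D_n\text{ infinite}\}$ never happens," which is what fails when the $\aleph_0$/$k_\omega$ structure breaks down.

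The main obstacle will be the passage from "$X$ is a sequential analytic space in which the $\pi$-base alternative of Lemma~\ref{pialt} holds everywhere (or fails everywhere)" to an actual $k_\omega$-decomposition versus an actual \pQ\ subspace — i.e.\ turning the soft alternative of Lemma~\ref{pialt} into the rigid combinatorial objects of Lemma~\ref{azs} and Definition~\ref{pQ}. In particular, the delicate point is to guarantee that the sets $\cl{\fc\sigma|{P_\gamma}}$ that show up are genuinely compact (so they can serve as the $K_n$ of a $k_\omega$-structure), which is where the stabilization provided by Lemma~\ref{thin} is essential: after stabilizing at the ordinal $\gamma$, no $q\in\bigcup_\alpha Q_\alpha$ is swallowed by $F(\fc\sigma|{P_{\gamma'}})$, and this rigidity is what should prevent the closures from "growing" and force either compactness or the existence of the discrete families $D_n$.
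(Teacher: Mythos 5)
Your proposal routes the argument through Lemma~\ref{pialt}, but that is the wrong dichotomy for this statement: alternative~(1) there captures convergent sequences inside \emph{nowhere dense} sets, and nowhere density is irrelevant to a $k_\omega$-decomposition (a convergent sequence together with its limit is nowhere dense yet compact) and, in the other direction, does not yield infinite \emph{closed discrete} sets for a \pQ\ subspace. This is exactly where your sketch breaks: you assert that the $D_n$ can be taken inside the sets $\fc\sigma|{P_\gamma}\in\Xi$ and that ``discreteness and closedness of each $D_n$ follows from the nowhere-density,'' which is false, and you give no argument passing from ``$X$ is not $k_\omega$'' to the existence, at a \emph{single} point $x$, of a family $\seq D_n$ meeting every neighborhood of $x$ infinitely. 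The missing idea is that the relevant invariant is compactness, not nowhere density: the paper runs Lemma~\ref{thin} with $F(B)=\cl{B}$ and with $Q_\alpha$ consisting precisely of those closures $\cl{\fc\sigma|{P_\beta}}$ ($\beta<\alpha$) that are \emph{not compact}. Stabilization at $\gamma$ then forces every $\cl{\fc\sigma|{P_\gamma}}$ to be compact, giving the countable family $K$ of candidate compacta for free.

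The two horns are then obtained from one construction rather than from a prior good/bad decomposition of $X$. Each non-compact closed set $q\in Q_\gamma$ contains an infinite closed discrete $D_q$ (non-compactness, not nowhere density, is what guarantees this in a countable regular sequential space). If $\cup\set D_q:q\in Q_\gamma.\cup\{x\}$ is a \pQ\ subspace for some $x$, you are done. If not, then for any convergent $S\to x$ there is an open $U\ni x$ with $q\setminus\cl{U}\not=\varnothing$ for all $q\in Q_\gamma$, i.e.\ $U\in P_\gamma$, and Lemma~\ref{dom} places $S\ain\fc\sigma|{P_\gamma}$ inside a member of $K$; this (together with sequentiality) is what yields the $k_\omega$-structure. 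Your instinct to use Lemmas~\ref{thin} and~\ref{dom} and the operator $\fc\sigma|P$ is right, and you correctly flag compactness of the $\cl{\fc\sigma|{P_\gamma}}$ as the delicate point, but without the specific choice of $Q_\alpha$ as the family of non-compact closures (and of $P_\alpha$ as the sets whose closures miss part of every such $q$) neither the compactness nor the \pQ\ subspace is actually produced.
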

\begin{proof}
Put $F(B)=\cl{B}$ and define
$Q_\alpha=\set\cl{\fc\sigma|{P_\beta}}:\sigma\in T, \hbox{
$\cl{\fc\sigma|{P_\beta}}$ is not compact}.$ where $P_\beta$ is defined
as in Lemma~\ref{thin}. Let $\gamma\in\omega_1$ be as in Lemma~\ref{dom}. The
construction of $Q_\alpha$ implies that every $\cl{\fc\sigma|{P_\gamma}}$
is compact.

Suppose $X$ has no \pQ\ subspace. 
Put $P=P_\gamma$, define a countable family 
$K=\set\cl{\fc\sigma|{P_\gamma}}:\sigma\in T, \hbox{
$\cl{\fc\sigma|{P_\gamma}}$ is compact}.$, and let $S\to x$ for some
$x\in X$. For each $q\in Q_\gamma$ pick a closed infinite discrete
subset $D_q\subseteq q$. Call the collection just constructed $\cal
D$. The case of finite $\cal D$ is immediate so we can assume that
$\cal D$ is infinite. Since $\cup{\cal D}\cup\{x\}$ is not a \pQ\
subspace of $\N$ there exists an open $U\ni x$ such that
$D_q\setminus\cl{U}\subseteq q\setminus\cl{U}\not=\varnothing$ for
every $q\in Q_\gamma$. Thus $U\in P_\gamma$ and it follows from
Lemma~\ref{dom} that there exists a $\sigma\in T$ such that
$S\ain\fc\sigma|{P_\gamma}\subseteq\cl{\fc\sigma|{P_\gamma}}\in K$.
\end{proof}

The next corollary follows from the lemma above and Lemma~\ref{nwdG}.

\begin{corollary}Let $X$ be an analytic space. Then $X\times
S(\omega)$ is sequential if and only if $X$ is a $k_\omega$-space.
\end{corollary}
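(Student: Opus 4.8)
The plan is to prove the two implications separately: the converse ($X$ a $k_\omega$-space $\Rightarrow$ $X\times S(\omega)$ sequential) from the stability of the class of $k_\omega$-spaces under finite products, and the forward implication from Lemma~\ref{pQseq} together with Lemma~\ref{pQns}. For the converse, assume $X$ is a $k_\omega$-space. First I would check that $S(\omega)$ is itself a $k_\omega$-space: the sets $K_n=\{\omega\}\cup\{(m,i):m\le n,\ i\in\omega\}$ are compact (each is a countable sequentially compact, hence Lindel\"of and countably compact, subspace), their union is $S(\omega)$, and a routine diagonalization shows that every $U$ meeting each $K_n$ in a relatively open set is open. Since a finite product of $k_\omega$-spaces is $k_\omega$, the countable space $X\times S(\omega)$ is $k_\omega$, and countable $k_\omega$-spaces are sequential.

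For the forward implication, suppose $X\times S(\omega)$ is sequential. The slice $X\times\{\omega\}$ is homeomorphic to $X$ and closed in $X\times S(\omega)$ (since $\{\omega\}$ is closed in $S(\omega)$), and closed subspaces of sequential spaces are sequential, so $X$ is an analytic sequential space. Lemma~\ref{pQseq} now gives two cases: either $X$ is a $k_\omega$-space, which is the desired conclusion, or $X$ contains a \pQ\ subspace, in which case Lemma~\ref{pQns} shows that $X\times S(\omega)$ is not sequential, contradicting the hypothesis.

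The only step that is not completely routine is passing sequentiality from $X\times S(\omega)$ down to $X$, and even that is immediate once one notes that $X\times\{\omega\}$ is a closed copy of $X$; every other ingredient is either quoted from the excerpt or a standard fact about $S(\omega)$ and the class of $k_\omega$-spaces, so I do not expect a real obstacle here.
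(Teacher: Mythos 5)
Your proof is correct and is essentially the argument the paper intends: the corollary is derived from Lemma~\ref{pQseq} together with Lemma~\ref{pQns} (the paper's citation of Lemma~\ref{nwdG} at that point appears to be a typo) plus the stability of $k_\omega$-spaces under finite products. The details you supply---that $S(\omega)$ is itself $k_\omega$ and that $X$ inherits sequentiality as the closed slice $X\times\{\omega\}$, which is needed before Lemma~\ref{pQseq} can be applied---are accurate.
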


Let $C$ be a closed copy of $S(\omega)$ in $X$. If $X$ is a
topological group, it is convenient to assume that $\Xo$ is the limit
point of $C$ and write $C=\cup\set C_n:n\in\omega.\cup\{\Xo\}$ where $C_n=\seq
c^n_i\to\Xo$ are disjoint subsets of $X$ that do not contain $\Xo$, such that each
$A\subseteq\cup C_n$ satisfying $|A\cap C_n|<\omega$ for every
$n\in\omega$ is closed in $X$. We will refer to this representation of
$C$ as a {\em natural closed copy of $S(\omega)$ in $X$} and will use the
notation above for the sake of brevity below. 

\begin{lemma}\label{Sthin}
Let $X$ be an analytic group and let $\cup\set
C_n:n\in\omega.$ be a natural closed copy of $S(\omega)$ in $X$. There
exists a countable family $\Xi$ of subsets of $X$ with the following
properties:
\begin{itemize}
\item[\rm(1)]for each $p\in \Xi$ there exists an $M_p\in\omega$ such that
$|p\cap a\cdot C_n|=\omega$ implies $n\leq M_p$ for any $a\in X$;

\item[\rm(2)]for each infinite $S\subseteq X$ where $S\to x$ for some $x\in
X$ there exists a $p\in \Xi$ such that $S\ain p$.

\end{itemize}
\end{lemma}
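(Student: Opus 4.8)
The plan is to apply Lemma~\ref{thin} with the set function $F(B)=\cl B$, but now indexing the ``bad'' sets by the group translates of the tails of $C$. Specifically, for $\sigma\in T$, $\beta<\alpha$, and $m\in\omega$ put
$$
q^{\sigma,\beta}_m=\cl{\fc\sigma|{P_\beta}}\setminus\bigcup\set a\cdot C_n:a\in X,\ n<m.
$$
Wait --- that union is in general dense, so this first guess is too crude; the family $\Xi$ we produce must instead be made of the \emph{restrictions} $\fc\sigma|{P_\gamma}$ themselves, and property~(1) must be read off from the way $P_\gamma$ is defined. So the correct move is: let $Q_\alpha$ consist of those sets $\cl{\fc\sigma|{P_\beta}}$ (for $\sigma\in T$, $\beta<\alpha$) for which \emph{no} bound $M$ works, i.e.\ for which there are $a_n\in X$ with $|\cl{\fc\sigma|{P_\beta}}\cap a_n\cdot C_n|=\omega$ for infinitely many $n$. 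Apply Lemma~\ref{thin} to get $\gamma<\omega_1$ with the property that $q\not\subseteq\cl{\fc\sigma|{P_{\gamma'}}}$ for any $q\in\bigcup_\alpha Q_\alpha$ and any $\gamma'\ge\gamma$; fix $P=P_\gamma$ and set $\Xi=\set\fc\sigma|{P_\gamma}:\sigma\in T.$. Since each $q\in\bigcup_\alpha Q_\alpha$ is exactly a set that fails the bound in~(1), and Lemma~\ref{thin} says no such $q$ sits inside any $\cl{\fc\sigma|{P_\gamma}}$, every member of $\Xi$ must satisfy~(1).

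The one subtlety in the previous paragraph --- and I expect it to be the main obstacle --- is verifying that ``$\cl{\fc\sigma|{P_\beta}}$ has no bound $M$ as in~(1)'' is exactly captured by some $q\subseteq\cl{\fc\sigma|{P_\beta}}$ with $q\in Q_\alpha$ for a suitable $\alpha$. The point is to close this family up correctly across countably many $\sigma$ and $\beta$ so that a single countable ordinal $\gamma$ works: this is precisely what Lemma~\ref{thin} is built to do, provided one takes $q=\cl{\fc\sigma|{P_\beta}}$ itself (not a proper subset) and puts it into $Q_\alpha$ for $\alpha>\beta$ whenever it fails the bound. One then needs the elementary observation that if $\cl{\fc\sigma|{P_\gamma}}$ \emph{did} fail~(1), it would appear (as $q$) in some $Q_\alpha$ and also contain itself, contradicting the conclusion of Lemma~\ref{thin}; here we use that the group structure makes the property ``$|p\cap a\cdot C_n|=\omega$ for arbitrarily large $n$, for suitable $a$'' absolute in the sense needed, which is where homogeneity via translation enters.

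For property~(2), let $S\to x$ with $S\subseteq X$ infinite. If $S\not\ain\fc\sigma|{P_\gamma}$ for every $\sigma\in T$, then by Lemma~\ref{dom} (contrapositive) there is no open $U\ni x$ with $U\in P_\gamma$; that is, for every open $U\ni x$ there is a $q\in Q_\gamma$ with $q\setminus\cl U=\varnothing$, i.e.\ $q\subseteq\cl U$. But each $q\in Q_\gamma$ is of the form $\cl{\fc\sigma|{P_\beta}}$ with $\beta<\gamma$ and, by choice of $\gamma$, does \emph{not} sit inside any $\cl{\fc\tau|{P_\gamma}}$ --- in particular it is not compact-controlled in the way~(1) demands, so it meets translates $a_n\cdot C_n$ infinitely for unboundedly large $n$. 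Translating by $a_n^{-1}$ and using that $C$ is a \emph{natural} closed copy of $S(\omega)$ (so a set meeting each $C_n$ finitely is closed), one manufactures from $q\subseteq\cl U$ a closed discrete set inside $\cl U$ witnessing that $\set a_n^{-1}\cdot(\text{stuff}):n\in\omega.$ forms a \pQ\ configuration, or else directly contradicts $S\to x$; feeding this through the argument of Lemma~\ref{pQseq} (which is available since $X$ is analytic and sequential) rules it out. Hence some $\sigma$ works and $S\ain\fc\sigma|{P_\gamma}\in\Xi$, completing the proof.
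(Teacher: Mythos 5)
Your argument for property (1) is essentially sound: putting the sets $\cl{\fc\sigma|{P_\beta}}$ that violate the bound into $Q_\alpha$ and noting that such a set would contain itself does extract, via Lemma~\ref{thin}, a level $\gamma$ at which every $\cl{\fc\sigma|{P_\gamma}}$ satisfies~(1). But your argument for property (2) has a genuine gap, and it is located exactly where you chose $F(B)=\cl{B}$. With that choice, $U\in P_\gamma$ means that $\cl{U}$ omits a point of \emph{each} of the countably many bad sets $q\in Q_\gamma$; since the witnessing translates $a_n\cdot C_n$ sit at uncontrolled locations $a_n$ (which may accumulate at $x$, or anywhere), all you can say is that for each individual $q$ there is some $U_q\ni x$ with $q\not\subseteq\cl{U_q}$ (regularity), and there is no way to intersect countably many such $U_q$ into a single open $U$. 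Your sketch at this point (``translating by $a_n^{-1}$ \dots\ manufactures a \pQ\ configuration \dots\ feeding this through the argument of Lemma~\ref{pQseq}'') does not close the gap: the conclusion of Lemma~\ref{pQseq} is a dichotomy that does not exclude the existence of a \pQ\ subspace, the exclusion is Lemma~\ref{pQneg}, whose proof \emph{uses} Lemma~\ref{Sthin}, so that route is circular; and Lemma~\ref{Sthin} is not even stated under a sequentiality hypothesis.

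The paper's proof avoids all of this by taking $F(B)=(\cl{B})^{-1}\cl{B}$. This single change recenters the obstruction at the identity: if $\cl{p}$ meets $a\cdot C_n$ infinitely then $a\in\cl{p}$ and hence $F(p)\supseteq a^{-1}\cl{p}$ meets the \emph{fixed} set $C_n$, so every $q\in Q_\gamma$ meets infinitely many of the original $C_n$. One can then choose a single transversal $D\subseteq\cup\set C_n:n\in\omega.$ with $|D\cap C_n|\leq1$ meeting every $q\in Q_\gamma$; the definition of a natural closed copy of $S(\omega)$ makes $D$ closed discrete with $\Xo\notin D$, and joint continuity of $(g,h)\mapsto g^{-1}h$ at $(x,x)$ produces one open $U\ni x$ with $(\cl{U})^{-1}\cl{U}\cap D=\varnothing$, i.e.\ $U\in P_\gamma$, after which Lemma~\ref{dom} finishes. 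That group-theoretic renormalization is the essential idea missing from your proposal.
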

\begin{proof}
Put $F(B)=(\cl{B})^{-1}\cl{B}$ and
define 
$$
Q_\alpha=\set
F(\fc\sigma|{P_\beta}):\sigma\in T,\ \beta<\alpha,\ |{\set
n\in\omega:F(\fc\sigma|{P_\beta})\cap
C_n\not=\varnothing.}|=\omega.
$$
for $\alpha<\omega_1$, where $P_\beta$ is as in Lemma~\ref{thin}.

Let $\gamma<\omega_1$ be the index provided by Lemma~\ref{thin} and
$\Xi=\set \fc\sigma|{P_\gamma}:\sigma\in T.$. Note that for $p=\fc\sigma|{P_\gamma}\in \Xi$ the set
$\set n\in\omega:F(p)\cap C_n\not=\varnothing.$ is finite. Otherwise
$q=F(p)\in Q_{\gamma+1}$ contrary to the choice of $\gamma$. Pick
$M_p\in\omega$ so that $F(p)\cap C_n=\varnothing$ for $n\geq M_p$. Now if
$|\cl{p}\cap a\cdot C_n|=\omega$ for some $n\in\omega$ and $a\in X$ then
$a\in\cl{p}$ thus $F(p)\cap C_n=(\cl{p})^{-1}\cl{p}\cap C_n\supseteq
a^{-1}\cdot\cl{p}\cap C_n\not=\varnothing$ so $n\leq M_p$.

Let $S\to x\in X$. Put $P=P_\gamma$. One can construct a set
$D\subseteq\cup\set C_n:n\in\omega.$ by induction such that $|D\cap
C_n|\leq 1$ for each $n\in\omega$ and $D\cap q\not=\varnothing$ for
each $q\in Q_\gamma$. Note that $D$ is a closed discrete subset of $X$ and
$x^{-1}x=\Xo\not\in D$. Therefore there exists an open neighborhood $U$ of
$x$ such that $F(U)\cap D=(\cl{U})^{-1}\cl{U}\cap D=\varnothing$ and
thus $U\in P$. Now Lemma~\ref{dom} implies the existence of a
$\sigma\in T$ such that $S\ain\fc\sigma|P\in \Xi$.
\end{proof}

\begin{lemma}\label{pQneg}Let $X$ be an analytic non Fr\'echet group. If $X$
contains a \pQ\ subspace then $X$ is not sequential.
\end{lemma}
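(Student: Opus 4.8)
The plan is to argue by contradiction: suppose $X$ is an analytic non Fr\'echet group containing a \pQ\ subspace $Y=\cup\set D_n:n\in\omega.\cup\{x\}$ as in Definition~\ref{pQ}, and suppose also that $X$ is sequential. The guiding idea is that a \pQ\ subspace is, up to translation, essentially a witness to the failure of sequentiality of $X\times S(\omega)$ (Lemma~\ref{pQns}), so we must manufacture inside $X$ alone the combinatorics that Lemma~\ref{pQns} uses the extra factor $S(\omega)$ to encode. Since $X$ is a non Fr\'echet group, it contains a closed copy of $S(\omega)$ (this is the standard fact underlying the "natural closed copy" terminology introduced just before Lemma~\ref{Sthin}); fix a natural closed copy $C=\cup\set C_n:n\in\omega.\cup\{\Xo\}$ with $C_n=\seq c^n_i\to\Xo$. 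First I would translate the \pQ\ subspace so that its limit point $x$ becomes $\Xo$: replacing each $D_n$ by $x^{-1}D_n$ we may assume $Y=\cup\set D_n:n\in\omega.\cup\{\Xo\}$, with each $D_n$ closed discrete, $\Xo\notin\cup D_n$, and every neighborhood of $\Xo$ meeting some $D_n$ in an infinite set.

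Next I would apply Lemma~\ref{Sthin} to the copy $C$ to obtain the countable family $\Xi$ and the bounding numbers $M_p$. The point of invoking Lemma~\ref{Sthin} is this: any convergent sequence $S\to y$ in $X$ satisfies $S\ain p$ for some $p\in\Xi$, and then for that $p$ the translates $a\cdot C_n$ with $n>M_p$ meet $\overline p$ only finitely, so $S$ meets $a\cdot C_n$ finitely for all but finitely many $n$, uniformly in $a$. In other words, no convergent sequence in $X$ can "run diagonally" through infinitely many distinct translates of the $C_n$'s. Now consider the set
\[
A=\bigcup_{n\in\omega}\bigcup_{i\in\omega}\{\,d^n_i\cdot c^n_i\,\}\subseteq X,
\]
where $\set d^n_i:i\in\omega.$ enumerates $D_n$. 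Since each $C_n\to\Xo$ and each $D_n$ is a fixed countable set, one checks that $\Xo\in\cl A$: given a neighborhood $W$ of $\Xo$, pick $n$ with $D_n\cap V$ infinite for a symmetric neighborhood $V$ with $V\cdot V\subseteq W$, then for large $i$ we have $d^n_i\in V$ and $c^n_i\in V$, so $d^n_i\cdot c^n_i\in W$. On the other hand $\Xo\notin A$ and I would check $A$ has no other limit points near $\Xo$, so sequentiality of $X$ forces a sequence $S\subseteq A$ with $S\to\Xo$ (or to some point whose translate lands us back at $\Xo$).

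The heart of the argument — and the step I expect to be the main obstacle — is deriving a contradiction from such an $S$. Write $S=\set d^{n(k)}_{i(k)}\cdot c^{n(k)}_{i(k)}:k\in\omega.$. By Lemma~\ref{Sthin} applied to $S$, there is $p\in\Xi$ with $S\ain p$, hence $|p\cap a\cdot C_n|=\omega$ forces $n\le M_p$ for every $a$. If infinitely many distinct values of $n(k)$ occur, I want to show that cofinally many translates $(d^{n(k)}_{i(k)})\cdot C_{n(k)}$ are "seen" by $\overline p$ — using that each such translate contains the point $d^{n(k)}_{i(k)}\cdot c^{n(k)}_{i(k)}\in S\subseteq p$, and that along a subsequence the $d^{n(k)}_{i(k)}\to\Xo$ so the relevant group elements $a=d^{n(k)}_{i(k)}$ accumulate, pushing us into the regime where $\overline p$ meets $a\cdot C_{n(k)}$ infinitely — contradicting $n(k)\le M_p$ for all large $k$. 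If instead only finitely many values of $n$ occur, then $S$ is (up to a finite set) contained in a single $\set d^n_i\cdot c^n_i:i\in\omega.$, which is a closed discrete subset of $X$ (as a "twisted diagonal" of a closed discrete set and a convergent sequence, by an argument parallel to the last sentence of the proof of Lemma~\ref{pQns}), so it contains no nontrivial convergent sequence — again a contradiction. The delicate point is making the accumulation argument in the first case precise: one must ensure that passing to the subsequence along which $d^{n(k)}_{i(k)}\to\Xo$ is legitimate, which follows because $\cup\set D_n:n\in\omega.$ is a \pQ\ subspace and $S$, projecting into it via the first coordinate of the "twist", must itself converge to $\Xo$ in that subspace; here one uses that a \pQ\ subspace has $\Xo$ as its unique non-isolated point. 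Once the subsequence is extracted, $F(p)=(\cl p)^{-1}\cl p$ meets infinitely many $C_n$, contradicting the finiteness clause established in the proof of Lemma~\ref{Sthin}. This contradiction shows $X$ cannot be sequential, completing the proof.
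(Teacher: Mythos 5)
Your overall strategy is the paper's: build a ``twisted diagonal'' set of products $d^n_i\cdot c$ inside $X$, show it is sequentially closed using Lemma~\ref{Sthin}, and exhibit $\Xo$ in its closure, contradicting sequentiality. The setup (closed copy of $S(\omega)$, translating the \pQ\ subspace to $\Xo$, the density computation via $U\cdot U\subseteq V$, and the split into ``finitely many levels'' versus ``infinitely many levels'') all match the paper. But the step you yourself flag as the main obstacle is a genuine gap, and it is exactly where the paper does something you omit. You pair $d^n_i$ with $c^n_i$ and then try to contradict Lemma~\ref{Sthin}(1) directly: if $S\ain p$ runs through infinitely many levels $n(k)$, each translate $d^{n(k)}_{i(k)}\cdot C_{n(k)}$ contains only the \emph{single} point $d^{n(k)}_{i(k)}\cdot c^{n(k)}_{i(k)}$ of $S$, so the hypothesis $|p\cap a\cdot C_n|=\omega$ is never triggered for any one $a$ and $n$; accumulation of the elements $a_k=d^{n(k)}_{i(k)}$ does not help, because the lemma concerns a single fixed translate. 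Your fallback, that $(\cl{p})^{-1}\cl{p}$ meets infinitely many $C_{n(k)}$, would require $d^{n(k)}_{i(k)}\in\cl{p}$, which nothing guarantees (the $D_n$ bear no relation to $p$). The auxiliary claim that $d^{n(k)}_{i(k)}\to\Xo$ along a subsequence is also unjustified: there is no continuous projection from your set $A$ onto $\cup\{D_n:n\in\omega\}$, and a \pQ\ subspace typically admits no convergent sequences to its limit point at all --- that is its whole purpose.

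The missing idea is a diagonal choice of indices. The paper enumerates $\Xi=\{p_k:k\in\omega\}$, sets $M^n=\max\{M_{p_i}:i\leq n\}+n+1$, and pairs $D_n$ with $C_{M^n}$, choosing $r^n_i$ so that $e^n_i=d^n_i\cdot c^{M^n}_{r^n_i}\notin p_0\cup\dots\cup p_n$. This choice is possible precisely by Lemma~\ref{Sthin}(1): since $M^n>M_{p_k}$ for $k\leq n$, each translate $d^n_i\cdot C_{M^n}$ meets each $p_k$, $k\leq n$, in a finite set. With that avoidance built in, a convergent sequence through infinitely many levels is almost contained in some $p_n$ yet its members at levels above $n$ lie outside $p_n$ --- an immediate contradiction requiring no accumulation argument. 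The remainder of your proof (finitely many levels yields a closed discrete set; $\Xo\in\cl{E}\setminus E$) then goes through as you describe.
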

\begin{proof}
Suppose $X$ is sequential. Since $X$ is not Fr\'echet, $X$ contains a
closed copy of $S(\omega)$ (see \cite{nyikos}) so let $\cup\set
C_n:n\in\omega.$ be a natural closed copy of $S(\omega)$ in $X$. 
Let $\Xi=\set p_n:n\in\omega.$ be the family provided
by Lemma~\ref{Sthin} and put $M^n=\max\set M_i:i\leq n.+n+1$ where $M_i$ have
the property of Lemma~\ref{Sthin}(2).

Let $D=\cup\set D_n:n\in\omega.\cup\{\Xo\}$
be a \pQ\ subspace of $X$ where $D_n=\set d^n_i:i\in\omega.$ are disjoint closed discrete
subspaces of $X$. Now $M^n$ is a strictly increasing sequence and by the construction
the set
$a\cdot C_{M^n}\cap(\cup\set p_i:i\leq n.)$ is finite for every
$n\in\omega$ and $a\in X$. Pick a strictly increasing sequence $\seq
r^n_i\subseteq\omega$ such that $e^n_i=d^n_i\cdot
c^{M^n}_{r^n_i}\not\in\cup\set p_k:k\leq n.$ and all $e^n_i\not=\Xo$ are
distinct. Define $E=\set e^n_i:n,i\in\omega.$ and suppose there is an
infinite $S\subseteq E$ such that $S\to x$ for some $x\in X$.

Note that for every $n\in\omega$ the set $\set
e^n_i:i\in\omega.\subseteq D_n\cdot C_{M^n}$ is closed and discrete in
$X$ so we can assume that $S=\set e^{n_i}_{m_i}:i\in\omega.$ for some
strictly increasing $\seq n_i\subseteq\omega$. Now $S\ain p_n$ for
some $p_n\in \Xi$ by Lemma~\ref{Sthin}(2) but $e^{n_i}_{m_i}\not\in p_n$
for $n_i>n$, a contradiction. Thus $E$ is sequentially closed.

Let $V$ be an open neighborhood of $\Xo$ in $X$ and $U\ni\Xo$ be an
open subset of $X$ such that $U\cdot U\subseteq V$. Pick $n\in\omega$
such that $D_n\cap U$ is infinite and choose $k\in\omega$ large enough
so that $c^{M^n}_{r^n_k}\in U$ and $d^n_k\in D_n\cap U$. Now
$e^n_k=d^n_k\cdot c^{M^n}_{r^n_k}\in U\cdot U\subseteq V$. Thus
$\Xo\in\cl{E}\setminus E$, a contradiction.
\end{proof}
 
Lemmas~\ref{pQneg} and \ref{pQseq} imply the following theorem

\begin{theorem}\label{Mresu}Let $X$ be a countable sequential group. Then the
following are equivalent:
\begin{itemize}
\item[\rm(1)]the topology of $X$ is analytic;

\item[\rm(2)]the topology of $X$ is $F_{\sigma\delta}$;

\item[\rm(3)]$X$ is either first countable or $k_\omega$.

\end{itemize}
\end{theorem}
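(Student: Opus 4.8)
The plan is to verify the three implications $(2)\Rightarrow(1)\Rightarrow(3)\Rightarrow(2)$, since this is the cheapest cycle. The implication $(2)\Rightarrow(1)$ is immediate from the definitions: an $F_{\sigma\delta}$ subset of $2^\N$ is Borel, hence analytic. For $(3)\Rightarrow(2)$ I would split into the two cases. If $X$ is first countable and countable it is second countable metrizable, so its topology (as a subset of $2^\N$) is easily seen to be $F_{\sigma\delta}$ — in fact one can write it as a countable intersection of $F_\sigma$ sets using a countable base. If $X$ is $k_\omega$, then as already recorded in the introduction (``Countable $k_\omega$-spaces are sequential and analytic, more precisely their topology is $F_{\sigma\delta}$'') the conclusion holds; I would either cite this directly or reconstruct it from the witnessing sequence $\set K_n:n\in\omega.$ of compacta, noting that $U\mapsto(\forall n)(U\cap K_n\text{ relatively open})$ is an $F_{\sigma\delta}$ condition on $U\in 2^\N$.

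The substantive implication is $(1)\Rightarrow(3)$, and this is where the lemmas do the work. Assume $\tau$ is an analytic sequential group topology on $\N$. I would first dispose of the Fréchet case: if $X$ is Fréchet, then by Lemma~\ref{pialt} combined with Lemma~\ref{nwdG} and Lemma~\ref{pib} one obtains that $X$ is metrizable, hence first countable. (Concretely: in a homogeneous Fréchet space alternative (1) of Lemma~\ref{pialt} cannot hold at any point — if it did, Lemma~\ref{nwdG} would be violated at $\Xo$ — so $\cal U$ is a local $\pi$-base at every point, and Lemma~\ref{pib} gives metrizability.) So assume $X$ is not Fréchet. Now apply Lemma~\ref{pQseq}: either $X$ is a $k_\omega$-space, in which case we are done, or $X$ has a \pQ\ subspace. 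But Lemma~\ref{pQneg} says that an analytic non-Fréchet group with a \pQ\ subspace is not sequential, contradicting our hypothesis that $X$ is sequential. Hence the \pQ\ alternative is impossible and $X$ must be $k_\omega$.

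The main obstacle in this whole argument is not in assembling the theorem — that is a short bookkeeping exercise once the lemmas are in hand — but lies entirely inside Lemma~\ref{pQneg} and, behind it, Lemma~\ref{pQseq} and Lemma~\ref{Sthin}: the delicate point is the use of the group operation to produce, from a hypothetical \pQ\ subspace $D$ and a natural closed copy $C$ of $S(\omega)$, a set $E=\set d^n_i\cdot c^{M^n}_{r^n_i}.$ that is simultaneously sequentially closed (controlled by the ``thin'' family $\Xi$ of Lemma~\ref{Sthin}, whose members meet only boundedly many translates $a\cdot C_n$) and yet has $\Xo$ in its closure (via $U\cdot U\subseteq V$ and the defining property of \pQ). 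That this $E$ can be chosen with all elements distinct and avoiding the relevant $p_k$'s, so that a would-be convergent sequence in $E$ is forced to lie ``above'' some fixed level and hence escapes the single $p_n$ that should capture it, is the technical heart of the paper. For the theorem statement itself, however, no new difficulty arises: it is the conjunction of Lemmas~\ref{pQseq} and \ref{pQneg} with the Fréchet/metrizable dichotomy extracted from Lemma~\ref{pialt}.
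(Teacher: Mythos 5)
Your proposal is correct and follows the paper's intended route exactly: the paper derives the theorem in one line from Lemmas~\ref{pQseq} and \ref{pQneg} (with the Fr\'echet case handled by the corollary of Lemmas~\ref{pialt}, \ref{nwdG}, and \ref{pib}, and the easy implications $(3)\Rightarrow(2)\Rightarrow(1)$ from the facts recorded in the introduction), which is precisely the bookkeeping you carry out. You also correctly locate the real mathematical content in the supporting lemmas rather than in the assembly of the theorem itself.
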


The result of Zelenyuk (see \cite{Z}) mentioned in the introduction
together with the theorem above imply

\begin{corollary}There are exactly $\omega_1$ non homeomorphic
analytic sequential group topologies. Moreover, if $X$ is an infinite analytic
sequential group then all finite powers $X^n$ are such and are
homeomorphic to each other.
\end{corollary}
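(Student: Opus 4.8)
The plan is to combine Theorem~\ref{Mresu} with Zelenyuk's theorem to count the homeomorphism types. First I would recall that, by Theorem~\ref{Mresu}, every countable sequential group whose topology is analytic is either first countable or a $k_\omega$-space. In the first countable case, Lemma~\ref{pib} (Birkhoff--Kakutani) makes the group metrizable, and a countable metrizable group without isolated points is homeomorphic to $\Q$; if isolated points are present the group is discrete (a countable group with an isolated point is discrete by homogeneity). So the first countable case contributes exactly one infinite homeomorphism type beyond the discrete one, namely $\Q$ (and the discrete countable group, which is also $k_\omega$, so it need not be counted separately among the nontrivial $k_\omega$ examples). In the $k_\omega$ case, Zelenyuk's classification (see \cite{Z}, cf.\ also \cite{kannan}) says that the topology of a countable non-discrete $k_\omega$ group is determined up to homeomorphism by a single ordinal invariant, the supremum of the Cantor--Bendixson ranks of its compact subspaces, and that this invariant ranges exactly over the countable ordinals; hence there are exactly $\omega_1$ such topologies up to homeomorphism.

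Next I would assemble the count. The analytic sequential group topologies split into the metrizable ones (of which, up to homeomorphism, there is the discrete topology and the topology of $\Q$) and the $k_\omega$ non-metrizable ones (of which there are exactly $\omega_1$ by Zelenyuk). Since $2+\omega_1=\omega_1$, and since there certainly exist at least $\omega_1$ pairwise non-homeomorphic examples (the $k_\omega$ ones already give this), the total number of non-homeomorphic analytic sequential group topologies is exactly $\omega_1$. One should note that the discrete topology is itself $k_\omega$ with Cantor--Bendixson rank of compacta equal to $0$, so there is no double counting to worry about; the only genuinely separate type is $\Q$, and adding one point to a class of size $\omega_1$ changes nothing.

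For the moreover clause, fix an infinite analytic sequential group $X$ and $n\in\omega$. If $X$ is metrizable then $X^n$ is metrizable, hence again analytic and sequential; and $X^n$ is countable, non-discrete iff $X$ is, so $X^n$ is homeomorphic to $X$ (both are $\Q$, or both discrete). If $X$ is $k_\omega$, then since the class of $k_\omega$-spaces is stable under finite products (stated in the introduction), $X^n$ is again a countable $k_\omega$ group, hence analytic and sequential. It remains to see $X^n\cong X$: by Zelenyuk's invariant it suffices to check that $X$ and $X^n$ have the same supremum of Cantor--Bendixson ranks of compact subspaces. A compact subset of $X^n$ is contained in a product $K_1\times\cdots\times K_n$ of compact subsets of $X$, and the Cantor--Bendixson rank of a finite product of countable compacta is the (natural/Hessenberg) sum of the ranks, which stays below the same countable supremum $\lambda=\sup_K \mathrm{rank}(K)$ whenever $\lambda$ is additively indecomposable; and $\lambda$ is additively indecomposable because for group topologies the relevant invariant is always of the form $\omega^\delta$ (this is exactly the content of Zelenyuk's analysis, and can also be read off from \cite{Sh2}, \cite{kannan}). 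Conversely $X$ embeds as a clopen subgroup of $X^n$, so the supremum for $X^n$ is at least that for $X$; the two are equal, whence $X^n\cong X$.

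The main obstacle I expect is the moreover clause, specifically verifying that the Cantor--Bendixson invariant is unchanged under finite powers: this needs both that finite products of countable compact spaces have Cantor--Bendixson rank given by the Hessenberg sum of the factors' ranks, and that for $k_\omega$ \emph{group} topologies the supremum of these ranks is additively indecomposable (equivalently, of the form $\omega^\delta$), so that the sum of two ranks below it is again below it. The containment $X\hookrightarrow X^n$ as a clopen subgroup gives one inequality for free; the reverse inequality is where the structure theory of $k_\omega$ group topologies (as in \cite{Z}) does the real work, and I would cite it rather than reprove it.
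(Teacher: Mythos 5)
Your proposal is correct and follows essentially the same route as the paper, which simply combines Theorem~\ref{Mresu} with Zelenyuk's classification of countable $k_\omega$ group topologies by the supremum of Cantor--Bendixson ranks of compacta (the paper's ``proof'' is a one-line citation). The extra details you supply --- Sierpi\'nski's characterization of $\Q$ for the metrizable case and the invariance of the Cantor--Bendixson invariant under finite powers --- are exactly the content the paper delegates to \cite{Z}.
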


\section{Examples and questions}
It has been demonstrated by a number of authors that sequential
$\aleph_0$-spaces have a number of properties resembling those of
separable metric spaces.

On the other hand, the properties that diferentiate between the two
classes of spaces are strikingly similar to those that separate
analytic spaces and countable metrizable ones. As an example, it is
easy to show that a sequential $\aleph_0$-space with a {\em weak
diagonal sequence property\/} is first countable (see \cite{michael}
and \cite{TU2}). The following example shows that analytic sequential
spaces are not necessarily $\aleph_0$-spaces, thus the statement of
Theorem~\ref{Mresu} is limited to topological groups.

\begin{example}{\rm Consider the following basis for a topology on $\tN$,
viewed as a tree with the usual order of end extension. Every point
$x\in\tN\setminus\{\varnothing\}$ is isolated and the basis of
neighborhoods of $\varnothing$ consists of complements of finite
unions of branches together with $\varnothing$. It is shown
in \cite{TU2}, Example~5.6 (see also \cite{TU1}, Remark~4.8) that the
resulting topology is $F_\sigma$ and Fr\'echet with the weak diagonal
sequence property but not first countable. Thus the space constructed
is not an $\aleph_0$-space.}
\end{example}

A partial result going in the opposite direction is possible. 
The next lemma is an easy corollary of the result that each quotient
image of the rationals is {\em determined} (see \cite{michael}) by a
countable family of metrizable subspaces.
\begin{lemma}If $X$ is a quotient image of a countable metric space
(equivalently, the rationals $\Q$) then $X$ is analytic (more
precisely, $X$ has an $F_{\sigma\delta}$ topology).
\end{lemma}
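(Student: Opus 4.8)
The plan is to exhibit an explicit continuous surjection from $\Q$ onto $X$ (which exists since $X$ is a quotient image of a countable metric space, and any countable metric space without isolated points that surjects continuously and quotiently can be replaced by $\Q$), and then use the structure theorem that $X$ is \emph{determined} by a countable family $\{M_k:k\in\omega\}$ of metrizable subspaces: a set $U\subseteq X$ is open if and only if $U\cap M_k$ is relatively open in $M_k$ for every $k$. Recall that each $M_k$, being a countable metrizable space, has a topology that is $F_\sigma$ (in fact, since $M_k$ is countable and metrizable, a set is open iff it is a countable union of basic open balls with rational radii centred at points of $M_k$, so the collection of open subsets of $M_k$ is an $F_\sigma$ — indeed even $F_\sigma$ relative to $2^{M_k}$) as a subset of $2^{M_k}$.

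First I would fix, for each $k$, a countable base $\{B^k_m:m\in\omega\}$ for the metrizable topology on $M_k$. The condition ``$U\cap M_k$ is relatively open in $M_k$'' means: for every $x\in U\cap M_k$ there is an $m$ with $x\in B^k_m\subseteq U\cap M_k$, i.e.\ $x\in B^k_m$ and $B^k_m\subseteq U$. I would rewrite this as a statement about the characteristic function of $U$ inside $2^X$. For a fixed point $x\in X$ and fixed $k$ with $x\in M_k$, the set of $U\in 2^X$ such that ``$x\in U\cap M_k$ implies there is $m$ with $x\in B^k_m\subseteq U$'' is the set where either $x\notin U$ (a clopen condition, depending on one coordinate) or there exists $m$ with $B^k_m\subseteq U$ (an open condition, being a countable union over $m$ of the clopen conditions $\{U: B^k_m\subseteq U\}$, each of which is a finite intersection of coordinate conditions since $B^k_m$ is a subset of $X$ and membership $B^k_m\subseteq U$ is ``$\bigwedge_{y\in B^k_m}(y\in U)$'' — but $B^k_m$ may be infinite, so this is a \emph{closed} $G_\delta$ condition, in fact a closed condition). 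Thus for each such pair $(x,k)$ the condition is ``$x\notin U$'' (open) union a closed set, hence the condition is $F_\sigma$? No — more carefully: $\{U:x\notin U\}$ is clopen; $\{U:\exists m\ B^k_m\subseteq U\}$ is $F_\sigma$ (countable union of closed sets $\{U:B^k_m\subseteq U\}$); so the union is $F_\sigma$. Then $\tau = \bigcap\{(x,k): x\in M_k\}$ of these $F_\sigma$ sets, a countable intersection, giving $\tau\in F_{\sigma\delta}$ as a subset of $2^X$.

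So the key steps, in order, are: (i) invoke the cited structure theorem to get the countable determining family $\{M_k\}$ of metrizable subspaces; (ii) for each $k$ fix a countable base of $M_k$ and translate ``$U\cap M_k$ relatively open'' into a countable conjunction, over points $x$ of the countable set $M_k$, of conditions each of the form ``$x\notin U$ or $\exists m\ B^k_m\subseteq U$''; (iii) observe each such condition defines an $F_\sigma$ subset of $2^X$; (iv) conclude $\tau$, being a countable intersection of these, is $F_{\sigma\delta}$, hence analytic. The main obstacle is step (ii)–(iii): one must be careful that the determining family genuinely characterises the open sets (not merely the closed sets, or sequential closures) and that the quantifier ``for every $x\in U\cap M_k$'' ranges over a \emph{countable} set so that the conjunction stays countable — this is where countability of $X$ (hence of each $M_k$) is used essentially. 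Everything else is a routine bookkeeping of Borel complexity in $2^X$, and no genuinely hard point arises once the determining family is in hand.
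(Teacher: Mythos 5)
Your argument is correct and follows exactly the route the paper intends: the paper offers no proof beyond citing Michael's result that a quotient of $\Q$ is determined by a countable family of metrizable subspaces, and your computation (writing ``$U\cap M_k$ relatively open in $M_k$ for all $k$'' as a countable intersection, over pairs $(x,k)$ with $x\in M_k$, of sets of the form ``clopen $\cup$ $F_\sigma$'', hence $F_{\sigma\delta}$ in $2^X$) is precisely the omitted ``easy'' step. The only blemishes are cosmetic: the opening remark about exhibiting an explicit surjection from $\Q$ is never used, and the momentary waffle about $\{U:B^k_m\subseteq U\}$ being ``$G_\delta$'' is correctly resolved to ``closed'' in the same sentence.
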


One might hope that due to their `tame' convergence structure,
$\aleph_0$-spaces form a subfamily of all analytic spaces. The next
simple example shows that this is not the case. 
\begin{example}
There are $2^\C$ countable Fr\'echet\/ $\aleph_0$-spaces with a single
non isolated point. In particular, there are non analytic spaces of
such kind.
\end{example}
\begin{proof}
Let $A\subseteq\R$ be an arbitrary subset of the real line. Put
$$
M(A)=(\{0\}\times A)\cup\set (1/n,q):n\in\N,q\in\Q.
$$
Define a topology on $M(A)$ by making the Euclidean neighborhoods of
points $(0,a)$, $a\in A$ the new basic neighborhoods and making all
other points isolated. The space just constructed is a separable
metrizable one. Consider the quotient map that sends $\{0\}\times A$
into a single point $\infty$ and is 1-1 on the rest of $M(A)$. Its image is a
Fr\'echet $\aleph_0$-space $P(A)$ with a single non isolated point.

Let $A\subseteq\R$ and $B\subseteq\R$ be two different subsets of the
real line, and let, say, $a\in A$ be such that $a\not\in B$. Given any
sequence of rationals $\seq q_n\subseteq\Q$ that converges to $a$ the set $\set
(1/n,q_n):n\in\omega.$ is a convergent sequence in $P(A)$ and is a
closed discrete subset of $P(B)$. Hence, the topologies of $P(A)$ and
$P(B)$ differ, resulting in exactly $2^\C$ different Fr\'echet
$\aleph_0$-topologies with a single non isolated point. Noting that
there are at most $\C$ possible homeomorphisms between topologies on a
given countable set, one can pick $2^\C$ pairwise non homeomorphic
spaces $P(A)$. Given that there are at most $\C$ analytic topologies
on any countable set, most $P(A)$ are non analytic.
\end{proof}  
\begin{remark}{\rm
The rather crude construction of the example above does not produce any
`explicit' $A\subseteq\R$ such that $P(A)$ is not analytic. A more
precise proof is possible that shows that $A$ is a projection of a
Borel subset of the product of the irrationals and the topology of
$P(A)$ viewed as the subset of the irrationals giving one more control
over the complexity of $P(A)$.}
\end{remark}

The final example shows that the property established in
Lemma~\ref{pialt} is not enough to show that the group is a $k_\omega$-space.

\begin{example}[{\rm CH}]There exists a countable sequential group
$G$ and a countable collection $\Xi$ of nowhere dense subsets of $G$
such that $G$ is not a $k_\omega$-space and for every convergent
sequence $S\subseteq G$ there is a $\xi\in\Xi$ such that
$S\subseteq\xi$.
\end{example}
\begin{proof}
The full details of the construction are somewhat tedious and are of limited
interest. We therefore present just a sketch of the proof. A number of
similar arguments can be found in \cite{shi}.

One starts with a non discrete first countable topology $\tau_0$ on
$\Q$ (any topologizable countable group would suffice; it is easy to
see that a similar construction just as readily gives an example of a
topological {\em field} with these properties). 
Pick a compact subset $K$ of $\Q$ such that $0\in K$ has
Cantor-Bendixson rank $\omega$ in $K$. Pick a countable collection of
convergent sequences in $\tau_0$ that witness the Cantor-Bendixson
rank of each point of $K$. Let $\eta_0$ be the finest group topology on
$\Q$ in which each of these sequences converges. The existence of such
a topology can be established by an easy argument 
(see, for example \cite{shi}). Let $\set A_\alpha:\alpha\in\omega_1.=2^\Q$.

The construction proceeds by induction on $\alpha\in\omega_1$ where at
stage $\alpha$ one defines a pair of topologies
$\tau_\alpha\subseteq\eta_\alpha$ such that $\tau_\alpha$ is first
countable and $\eta_\alpha$ is determined by countably many compact
subsets of finite Cantor-Bendixson rank. At limit stages the
construction proceeds in a natural (and trivial) way. 

At stage $\alpha+1$ one picks $\tau_{\alpha+1}\supseteq\tau_\alpha$
such that $\tau_{\alpha+1}$ contains enough open in $\eta_\alpha$ subsets to show that
a given $A_\alpha\subseteq\Q$ is closed in $\tau_{\alpha+1}$ provided
it is closed in $\eta_\alpha$ and not compact in $\tau_{\alpha+1}$
provided it is not compact in $\eta_\alpha$.
Now $\eta_{\alpha+1}\subseteq\eta_\alpha$ is chosen as the finest topology coarser than
$\eta_\alpha$ in which $S\to0$ for some $S\subseteq K$ such
that $S\to0$ in $\tau_{\alpha+1}$ and $S$ is discrete in
$\eta_\alpha$. Such an $S$ can be built inductively by first finding
a compact (in $\tau_{\alpha+1}$) $K'\subseteq K$ of infinite
Cantor-Bendixson rank.

Define
$$
\tau=\cup\set\tau_\alpha:\alpha\in\omega_1.=\cap\set\eta_\alpha:\alpha\in\omega_1.
$$
and put
$$
\Xi=\set d+(-1)^{\delta_0}K+\cdots +(-1)^{\delta_n}K:\delta_i\in\{0,1\}, d\in[\Q]^{<\omega},
n\in\omega.
$$
It is easy to see that the choice of $\tau_\alpha$
ensures that $\tau$ is sequential and the choice of $\eta_\alpha$ and $\tau_{\alpha+1}$
prevents $\tau$ from being $k_\omega$. Moreover, each compact in $\tau$
subset of $\Q$ is compact in some $\eta_\alpha$ and therefore resides
in some `monomial' over $K$. Thus the family $\Xi$ has the desired
property.
\end{proof}
Finally, it seems natural to ask whether Lemma~\ref{pQneg} can be
generalized to non analytic groups.

\begin{question}Do there exist (countable) sequential non Fr\'echet
groups that contain a (closed) \pQ\ subspace?
\end{question}

\end{document}